\documentclass{amsart}

\usepackage{amsmath,amsthm,amsfonts,amssymb,cmap,enumerate,rotating,graphicx, mathrsfs,mathtools,tikz}
\usetikzlibrary{matrix,arrows}
\usepackage{enumitem}

\usepackage{hyperref}
\hypersetup{
    colorlinks,
    citecolor=black,
    filecolor=black,
    linkcolor=black,
    urlcolor=black
}

\usepackage[textwidth=2.8cm]{todonotes}
\usepackage{cmap}

\newtheorem{thm}{Theorem}[section]
\newtheorem{prop}[thm]{Proposition}

\newtheorem*{thm*}{Theorem}
\newtheorem*{prop*}{Proposition}
\theoremstyle{definition}
\newtheorem{defn}[thm]{Definition}
\newtheorem{rough defn}[thm]{Rough definition}
\newtheorem{eg}[thm]{Example}

\newtheorem{key construction}[thm]{Key construction}
\newtheorem{conjectural definition}[thm]{Conjectural definition}

\newtheorem{rmk}[thm]{Remark}
\newtheorem{observation}[thm]{Observation}

\newcommand*{\defeq}{\mathrel{\vcenter{\baselineskip0.5ex \lineskiplimit0pt
                     \hbox{\scriptsize.}\hbox{\scriptsize.}}}%
                     =}
\newcommand{\series}[1]{[\![t_1, \ldots, t_{#1}]\!]}

\newcommand\EquivTo{\xrightarrow{
   \,\smash{\raisebox{-0.65ex}{\ensuremath{\scriptstyle\sim}}}\,}}
\newcommand{\xrightarrowdbl}[1]{\xrightarrow{#1}\mathrel{\mkern-14mu}\rightarrow
}

\newcommand{\BBA}{\mathbb{A}}
\newcommand{\BBN}{\mathbb{N}}
   
\newcommand{\CA}{\mathcal{A}}
\newcommand{\CB}{\mathcal{B}}

\newcommand{\CD}{\mathcal{D}}

\newcommand{\CH}{\mathcal{H}}

\newcommand{\CK}{\mathcal{K}}
\newcommand{\CL}{\mathcal{L}}

\newcommand{\CP}{\mathcal{P}}

\newcommand{\CY}{\mathcal{Y}}
\newcommand{\CZ}{\mathcal{Z}}

\newcommand{\colim}{\operatornamewithlimits{colim}}
\newcommand{\emb}{\hookrightarrow}
\newcommand{\fSet}{\text{fSet}}
\newcommand{\id}{\operatorname{id}}
\newcommand{\op}{\text{op}}

\newcommand{\surj}{\twoheadrightarrow}

\newcommand{\Ran}{\operatorname{Ran}}
\newcommand{\ChAlg}[1]{\operatorname{ChAlg}(#1)}
\newcommand{\ch}{\mathit{ch}}
\newcommand{\FAlg}[1]{\operatorname{FAlg}(#1)}
\newcommand{\WFAlg}[1]{\operatorname{WFAlg}(#1)}
\newcommand{\FSp}[1]{\operatorname{FSp}({#1})}
\newcommand{\WFSp}[1]{\operatorname{WFSp}({#1})}
\newcommand{\Glue}{\operatorname{Glue}}
\newcommand{\Weak}{\operatorname{Weak}}

\newcommand{\GrGXI}[1]{\operatorname{Gr}_{G, #1^I}}

\newcommand{\pt}{\text{pt}}
\newcommand{\Spec}{\operatorname{Spec}}
\newcommand{\Sch}{\operatorname{Sch}}
\newcommand{\graph}[1]{\{ #1\}}
\newcommand{\Hilb}{\operatorname{Hilb}}
\newcommand{\Supp}{\operatorname{Supp}}
\newcommand{\red}{\mathit{red}}
\newcommand{\redEmb}[1]{\iota_{#1}}

\newcommand{\HilbI}[2]{\CH\!\textit{ilb}_{#1^{#2}}}

\setcounter{tocdepth}{1}

\begin{document}
\title[Universal factorization spaces and algebras]{Universal factorization spaces and algebras}
\author[E.~Cliff]{Emily Cliff}

%

\begin{abstract}
We introduce categories of weak factorization algebras and factorization spaces, and prove that they are equivalent to the categories of ordinary factorization algebras and spaces, respectively. This allows us to define the pullback of a factorization algebra or space by an \'etale morphism of schemes, and hence to define the notion of a universal factorization space or algebra. This provides a generalization to higher dimensions and to non-linear settings of the notion of a vertex algebra. 
\end{abstract}

\keywords{Factorization algebra. Factorization space. Vertex algebra. Chiral algebra.}

\subjclass{17B69. 14D99.}

\maketitle

\tableofcontents

\section{Introduction}
\label{sec: intro}
A vertex algebra describes the symmetries of a two-dimensional conformal field theory (CFT), while a factorization space or factorization algebra over a complex curve consists of local data in such a field theory. Roughly, the factorization structure encodes collisions between local operators. A \emph{quasi-conformal} vertex algebra---that is, one equipped with a one-dimensional infinitesimal translation corresponding to Virasoro symmetry of the CFT---gives rise to a chiral algebra on each smooth complex curve, in a way compatible with pullback along \'etale morphisms between curves (see \cite{FBZ2}, and also \cite{HL}). Since it is known from the work of Beilinson--Drinfeld \cite{BD2} and Francis--Gaitsgory \cite{FG} that a chiral algebra over a variety is equivalent to a factorization algebra over the same variety, we thus obtain a family of factorization algebras over all smooth curves, expected to satisfy some compatibility condition related to \'etale morphisms.
\vskip 3mm
In this paper, we delineate this compatibility condition, thereby formulating the definition of a \emph{universal factorization algebra} of any dimension $d$, which is new even in complex dimension one (i.e. for 2D CFTs). We also treat the non-linear analogue, a universal factorization space. A key ingredient is the notion of a weak factorization algebra or space, which we prove is equivalent to an ordinary factorization algebra or space, respectively, and which allows us to define pull-back along \'etale morphisms. As a consequence of these definitions, we put on a firm foundation important work of Kapranov--Vasserot \cite{KV, KV2, KV3, KV4}.  Our definitions and results accommodate all standard examples, are compatible with the already established notions of a universal chiral algebra and a quasi-conformal vertex algebra, and provide a framework for the exploration of vertex-algebra--like symmetries in higher dimensions.
\vskip 3mm

\begin{thm*}[Proposition \ref{prop: comparing chiral and factorization pullback}]
Let $X$ and $Y$ be varieties, and let $\phi: X \to Y$ be an \'etale morphism between them. Then if $\mathcal{A}$ is a factorization algebra over $Y$, the factorization algebra $\phi^*\mathcal{A}$ on $X$ obtained by pulling back along $\phi$ corresponds to the chiral algebra on $X$ obtained by taking taking the pull-back of the chiral algebra on $Y$ corresponding to $\mathcal{A}$. 

It follows that the category of universal chiral algebras of a given dimension $d$ is equivalent to the category of universal factorization algebras of dimension $d$ as defined in this paper. In particular, the category of universal factorization algebras of dimension one is equivalent to the category of quasi-conformal vertex algebras. 
\end{thm*}

The property of universality for a family of factorization spaces or algebras is important for three reasons: first, it is not clear how to extend the definition of a vertex algebra to a higher-dimensional analogue, encoding symmetries of higher-dimensional CFT (see for example \cite{B2}). On the other hand, Francis and Gaitsgory \cite{FG} showed that the definition of a factorization algebra or space extends in a natural way from curves to complex varieties of arbitrary dimension, and so the categories of universal factorization algebras and spaces provide generalizations to higher dimensions and to non-linear settings of the notion of a quasi-conformal vertex algebra. 

Second, even in the one-dimensional setting, there are advantages to working with factorization structures rather than vertex algebras: namely, it facilitates the transfer of information and ideas among algebraic geometry, representation theory, and physics. Examples of factorization spaces are constructed from some moduli spaces arising in geometric representation theory and algebraic geometry; linearizing these factorization spaces, for instance by taking their cohomology with values in different sheaves, provides examples of factorization algebras, which can then be studied using the geometry of the underlying spaces. Conversely, the existence and properties of the factorization structures can be used to better understand the original moduli spaces. 

Finally, universality of a family of factorization spaces or factorization algebras can allow us to drastically simplify computations: the upshot of the condition is that the family is completely determined by its behaviour over a $d$-dimensional disc $\Spec \mathbb{C} \series{d}$, and consequently that all computations can be reduced to the case where $X$ is the most convenient $d$-dimensional variety, in practice often $\BBA^d$. Moreover, just as a factorization space over any variety $X$ can be used to produce examples of factorization algebras over $X$ by pushing forward line bundles on the factorization space that are compatible with the factorization structure, so can a universal factorization space be used to produce a universal factorization algebra: one only needs to check that the line bundles in question are compatible with the compatibility isomorphisms comprising the universality of the factorization space. Alternatively, given a family of factorization algebras, one may check that it is universal by constructing it from such a compatible family of line bundles on a universal factorization space, these latter conditions being perhaps more straightforward to verify. 

This strategy is employed for example by Kapranov and Vasserot in \cite{KV}, where they claim that the computation of the chiral de Rham complex can be reduced to the case of the curve $\BBA^1$. It turns out that the exposition in \cite{KV} implicitly uses an incorrect definition of a universal factorization space, and relies on the existence of maps which are in general undefined except on an open neighbourhood of the diagonal; however as we will see in section \ref{sec: examples}, if the correct definition as presented in this paper is used, the domain of definition of these maps is large enough to imply that the factorization space in question is universal, and consequently that the computations over $\BBA^1$ do indeed suffice. Indeed, let $X$ be an affine scheme, let $C$ be a curve, and recall the factorization space $\mathcal{L}(X)_{\Ran{C}}$ of meromorphic jets from $C$ to $X$ introduced in \cite{KV}; then we prove the following:

\begin{prop*}[Example \ref{eg: meromorphic jets KV}]
The assignment $C \mapsto \mathcal{L}(X)_{\Ran{C}}$ defines a universal factorization space in dimension one. 
\end{prop*}

As suggested by the approach of \cite{KV}, the expectation that a category of universal factorization spaces or algebras should exist is not a new one, but the definitions are slightly more subtle than one would na\"ively expect. The obvious way to begin is to say that a universal factorization algebra must be an assignment
\begin{align*}
X, \text{ a $d$-dimensional variety} \mapsto \CA_{\Ran{X}}, \text{ a factorization algebra over $X$}, 
\end{align*}
together with data giving compatibilities between pullbacks of factorization algebras along \'etale maps $X \to Y$. However, for this to make sense, we need to define what we mean by the \emph{pullback} of a factorization algebra $\CA_{\Ran{Y}}$ over $Y$ by an \'etale map $\phi: X \to Y$. It turns out that this is not written explicitly anywhere in the literature: to compute it, we must consider the chiral algebra $\CB_Y$ associated to $\CA_{\Ran{Y}}$, take its pullback $\phi^*\CB_Y \defeq \CB_X$, and define $\phi^*(\CA_{\Ran{Y}})$ to be the factorization algebra $\CA_{\Ran{X}}$ associated to the chiral algebra $\CB_X$. 

In this paper we give a description of $\phi^*(\CA_{\Ran{Y}})$ without making use of the equivalence between factorization algebras and chiral algebras, thus giving a more hands-on construction of the factorization algebra. This allows us to formulate the definition of a universal factorization algebra; we also have a non-linear analogue, in the language of factorization spaces. The key idea is the intuitive observation that the interesting information of a factorization space $\CY_{\Ran{Y}} = \left\{\CY_{X^I} \to X^I\right\}$ is contained entirely in the data of
\begin{enumerate}
\item $\CY_X \to X$, and
\item the information of how to glue copies of $\CY_X$ together as we approach the diagonal $\Delta^I(X)$ in $X^I$---that is, the restriction of the factorization isomorphisms to open neighbourhoods of the diagonal.
\end{enumerate}

We formalize this intuition by introducing the notion of a \emph{weak factorization space}, where we only require the data of the spaces $\CY_{X^I}$ and the structure isomorphisms of a factorization space to be given close to the diagonal, and by proving the following:

\begin{thm*}[Theorem \ref{thm: weak factorization spaces are factorization spaces}, Theorem \ref{thm: weak factorization algebras}]
The forgetful functor from the category of factorization spaces over $X$ to the category of weak factorization spaces over $X$ is an equivalence of categories.

Similarly, the functor from the category of factorization algebras over $X$ to the category of weak factorization algebras over $X$ is an equivalence of categories.
\end{thm*}

It turns out to be much easier to define the pullback of a weak factorization space or algebra than that of an ordinary one, but this equivalence allows us to extend the definition. With these definitions in hand, we can introduce universal factorization spaces and factorization algebras; we can furthermore check that our definition is compatible with the more familiar notion of a universal chiral algebra. 

The structure of the paper is as follows. We begin in section \ref{sec: background} by recalling the definitions of factorization spaces and algebras, and chiral algebras, and fixing notation that will be used in the rest of the paper. In section \ref{sec: preliminary definition} we propose a na\"ive definition of the pullback of a factorization space, which is the definition that was used implicitly in e.g. \cite{KV}. We explain why it is not a good definition, to understand why the definition we will eventually work with needs to be more subtle.

In section \ref{sec: weak factorization spaces}, we will introduce the notion of a \emph{weak} factorization space or algebra, and show that the categories of weak and ordinary (non-weak) factorization spaces over a fixed variety $X$ are equivalent. In section \ref{sec: weak factorization algebras} we extend these definitions and arguments to the case of weak factorization algebras.  We will see in section \ref{sec: pullback of factorization spaces} that it is straightforward to define the pullback of a weak factorization space or algebra along an \'etale morphism. This allows us to define the pullback of a factorization space (or algebra) by viewing it as a weak factorization space (or algebra, respectively), and applying the pullback functor in that category.

We will conclude with some remarks justifying these definitions. In section \ref{sec: examples} we formulate carefully the notion of a universal factorization space. It is then straightforward to verify that some common and important examples of factorization spaces, namely, the Beilinson-Drinfeld Grassmannian (introduced in \cite{BD2}), and the jet-spaces studied by Kapranov and Vasserot in \cite{KV}, form universal factorization spaces, as expected. We also provide an example of a universal factorization space in arbitrary dimension $d$, constructed using the Hilbert scheme of points. Finally, in section \ref{sec: pullback of factorization and chiral algebras}, we will show that our definition of the pullback of a factorization algebra agrees with the definition obtained by pulling back the corresponding chiral algebra.

\subsection{Conventions and notation}
\label{subsec: conventions}
We fix $k = \overline{k}$, an algebraically closed field of characteristic zero, and work with the category $\Sch$ of schemes of finite type over $k$. We fix a dimension $d$ and use capital Roman letters $X, Y \ldots$ to denote smooth varieties over $k$ of dimension $d$. For a given such variety $X$, we will denote by $\CD(X)$ the \emph{infinity} or \emph{d.g. category} of $\CD$-modules on $X$. Functors between such categories should always be understood to be functors of $(\infty,1)$-categories. 

Given a variety $X$, we will work with its \emph{Ran space}, $\Ran{X}$. This is defined as follows. We let $\fSet$ denote the category of finite non-empty sets and surjections. Given such a surjection $\alpha: I \surj J$, we obtain a diagonal embedding of the Cartesian products of $X$: $\Delta_X(\alpha) = \Delta(\alpha): X^J \emb X^I$ (we suppress the space $X$ from the notation when no confusion will result). Using the Yondeda embedding, we view each $X^I$ as a contravariant functor from $\Sch^\op$ to the category of $\infty$-groupoids, i.e. as a \emph{prestack}. We can then take the colimit inside the category of prestacks, and define this to be the Ran space of $X$:
\begin{align*}
\Ran{X} \defeq \colim_{I \in \fSet^\op} X^I,
\end{align*}
with the structure maps $X^I \to \Ran{X}$ denoted by $\Delta_{X^I}$. The Ran space is not representable as a scheme (it is a \emph{pseudo-indscheme}), but it is still possible to study its geometry. In particular, its category of $\CD$-modules can be identified with the limit $\lim_{I \in \fSet} \CD(X^I)$ (over the maps $\Delta(\alpha)^!: \CD(X^I) \to \CD(X^J)$), and also with the colimit $\colim_{I \in \fSet^\op} \CD(X^I)$ (over the maps $\Delta(\alpha)_{*} : \CD(X^J) \to \CD(X^I)$). We let $\Delta_{X^I}^!$ and $(\Delta_{X^I})_*$ denote the structural functors of the limit and the colimit descriptions, respectively. For more details on the d.g. category of $\CD$-modules on the Ran space, see for example section 2.1 of \cite{FG}.

\subsection{Acknowledgments}
The idea to formalize the notion of a universal factorization space was inspired in part by Kapranov and Vasserot's paper \cite{KV}. In particular, I thank Mikhail Kapranov for a helpful conversation in which he made suggestions leading to the definition of a weak factorization space. I also thank Tom Nevins for his valuable advice, and Kobi Kremnitzer for numerous enjoyable discussions on this topic. I am grateful to the reviewer for a number of constructive remarks and recommendations.

\section{Background: factorization spaces}
\label{sec: background}
Let us begin by fixing some basic notation and recalling some essential definitions that will be used throughout the paper. Our primary references for chiral algebras and factorization algebras are \cite{BD1} and \cite{FG}, wherein many more details can be found. 

\begin{defn}\label{factorization space}
A \textit{factorization space} over $X$ consists of the following data:
\begin{enumerate}
\item A prestack $\CY_{\Ran{X}}$ expressible as a colimit over $\fSet^\op$:
\begin{align*}
\CY \simeq \colim_{I\in\fSet^\op}\CY_{X^I},
\end{align*}
where for each $I \in \fSet$, $\CY_{X^I} \xrightarrow{f_I} X^I$ is an indscheme over $X^I$ equipped with a formally integrable connection, and for any $\alpha: I \surj J$, there is an indproper morphism $\CY(\alpha): \CY_{X^J} \to \CY_{X^I}$ compatible with the maps $f_I$, $f_J$ and $\Delta(\alpha)$. 
\item \emph{Ran's condition:} For any surjection $\alpha: I \surj J$, there is a natural map $\nu_{\alpha}: \CY_{X^J} \to X^J \times_{X^I} \CY_{X^I}$ given by
\begin{center}
\begin{tikzpicture}[>=angle 90]
\matrix(b)[matrix of math nodes, row sep=2em, column sep=2em, text height=1.5ex, text depth=0.25ex]
{\CY_{X^J} &                          &         \\
         & X^J \times_{X^I} \CY_{X^I} & \CY_{X^I} \\
         & X^J                      & X^I     \\};
\path[->, font=\scriptsize]
 (b-1-1) edge[bend left=20] node[above right]{$\CY(\alpha)$} (b-2-3)
 (b-1-1) edge[bend right=40] node[below left]{$f_J$} (b-3-2)
 (b-2-2) edge (b-2-3)
 (b-2-2) edge (b-3-2)
 (b-2-3) edge node[right]{$f_I$} (b-3-3)
 (b-3-2) edge node[below]{$\Delta(\alpha)$} (b-3-3);
\path[->, font=\scriptsize]
 (b-1-1) edge[dashed] node[above right]{$\nu_\alpha$} (b-2-2);
\end{tikzpicture}
\end{center}
We require that $\nu_\alpha$ be an equivalence of indschemes, and that $\nu$ be associative in the obvious sense.\footnote{In particular, it follows that the morphisms $\CY(\alpha)$ are contravariantly functorial in $\alpha$.} 
\item \emph{Factorization:} Given $\alpha: I \surj J$ as above, we obtain a partition of $I$ as $\bigsqcup_{j\in J} I_j$, where $I_j = \left\{ i \in I \ \vert \ \alpha(i) = j \right\}$, and consider the following open subscheme of $X^I$:
\begin{align*}
U = U(\alpha) \defeq \left\{ (x_i)_{i \in I} \in X^I \ \vert\ x_{i_1} \ne x_{i_2} \text{ unless } \alpha(i_1) = \alpha(i_2) \right\}.
\end{align*}
We let $j=j(\alpha)$ denote the open embedding $U \emb X^I \cong \prod_{j\in J} X^{I_j}$, and consider the following two pullback diagrams:
\begin{center}
\begin{tikzpicture}[>=angle 90]
\matrix(c)[matrix of math nodes, row sep=3em, column sep=1.6em, text height=1.5ex, text depth=0.25ex]
{U \times_{X^I}\CY_{X^I} & \CY_{X^I} && U \times_{X^I} \left(\prod_{j\in J} \CY_{X^{I_j}}\right) & \prod_{j \in J} \CY_{X^{I_j}} \\
 U         & X^I  && U                          & \prod_{j \in J} X^{I_j}\\};
\path[->, font=\scriptsize]
 (c-1-1) edge node[above]{$j^\prime$}(c-1-2)
 (c-1-1) edge node[left]{$f_I^\prime$}(c-2-1)
 (c-1-2) edge node[right]{$f_I$} (c-2-2)
 (c-1-4) edge node[above]{$j^{\prime\prime}$}(c-1-5)
 (c-1-4) edge node[left]{$\left(\prod_{j \in J}f_{I_j}\right)^{\prime\prime}$}(c-2-4)
 (c-1-5) edge node[right]{$\prod_{j \in J}f_{I_j}$} (c-2-5);
\path[right hook->, font=\scriptsize]
 (c-2-1) edge node[below]{$j$} (c-2-2)
 (c-2-4) edge node[below]{$j$} (c-2-5);
\end{tikzpicture}
\end{center}
We require an equivalence 
\begin{align*}
d_\alpha :  U \times_{X^I} \left(\prod_{j\in J} \CY_{X^{I_j}}\right) \EquivTo U \times_{X^I} \CY_{X^I}
\end{align*}
of indschemes over $U$. Moreover, these equivalences $d_\alpha$ should be associative and compatible with the other structure maps $\nu_\alpha$. 
\end{enumerate}
\end{defn}

\begin{eg}
\label{eg: compatibility of d}
Let us write out the compatibility condition between different $d_\alpha$ explicitly. Suppose that we have surjections of finite sets as follows:
\begin{align*}
I \xrightarrowdbl{\beta} K \xrightarrowdbl{\gamma} J.
\end{align*}
Let $\alpha$ denote the composition $\gamma \circ \beta$, and furthermore fix the notation:
\begin{align*}
K = \bigsqcup_{j \in J} K_j; & \qquad I = \bigsqcup_{j \in J} I_j;
\end{align*}
\begin{align*}
\beta_j: I_j \surj K_j.
\end{align*}
Notice that $U(\beta) \subset \prod_{j \in J} U(\beta_j)$, and also $U(\beta) \subset U(\alpha)$. This allows us to restrict $\prod_{j \in J} d_{\beta_j}$ and $d_\alpha$ to $U(\beta)$, so that the following composition is well-defined:
\begin{align*}
\left(\prod_{k \in K} \CY_{X^{I_k}} \right) \vert_{U(\beta)} \xrightarrow{\prod_{j \in J} d_{\beta_j}} \left(\prod_{j \in J} \CY_{X^{I_j}} \right) \vert_{U(\beta)} \xrightarrow{d_{\alpha}} \left(\CY_{X^I} \right) \vert_{U(\beta)}. 
\end{align*}
The compatibility condition is simply that this composition is equal to $d_{\beta}$. 
\end{eg}

For comparison, let us now give a rough definition of a factorization algebra. A more rigorous definition will be given in section \ref{sec: weak factorization algebras}.
\begin{rough defn}\label{rough defn: factorization algebra}
A \emph{factorization algebra} $\CA$ on $X$ is the linear analogue of a factorization space: it consists of a family $\left\{\CA_{X^I} \in \CD(X^I) \right\}$ of $\CD$-modules\footnote{Let us emphasize that by $\CD(Y)$ we will always mean the d.g. category of $\CD$-modules on $Y$, and in particular that all functors of $\CD$-modules are the derived versions.} together with isomorphisms 
\begin{align*}
\nu_\alpha&: \CA_{X^I} \to \Delta(\alpha)^! \CA_{X^J}; \\ d_\alpha&:  j(\alpha)^* \left(\boxtimes_{j \in J} \CA_{X^{I_j}} \right) \to j(\alpha)^* \CA_{X^I}
\end{align*}
for any $\alpha: I \surj J$. Since we are working in the d.g. categories, the compatibilities between these different isomorphisms $\nu_\bullet$, $d_\bullet$ are additional data consisting of equivalences between various compositions, rather than simply equalities as in the case of factorization spaces. These equivalences are themselves subject to higher coherence requirements. We will make this more precise in section \ref{sec: weak factorization algebras}. 
\end{rough defn}

\begin{defn}\label{defn: chiral algebra}
A \emph{chiral algebra} on $X$ is a $\CD$-module $\CB_X$ on $X$ together with the structure of a Lie algebra object on the $\CD$-module 
\begin{align*}
\CB_{\Ran X} \defeq (\Delta_{X})_ *(\CB_X) \in (\CD(\Ran X), \otimes^\ch).
\end{align*}
(For details on the definition of the chiral tensor product $\otimes^\ch$ see section 2.3 of \cite{FG}; see also section \ref{sec: weak factorization algebras} of the current paper where we define an analogous tensor product on the category of $\CD$-modules over an open subspace $W$ of the Ran space.)
\end{defn}

Somewhat more specifically, we require a morphism of sheaves on $\Ran{X}$
\begin{align*}
\mu_{\CB}: \CB_{\Ran{X}} \otimes^\ch \CB_{\Ran{X}} \to \CB_{\Ran{X}}
\end{align*}
together with higher isomorphisms corresponding to skew-symmetry and the Jacobi identity. 

In particular, considering the restriction of this map along the canonical map $X^2 \to \Ran{X}$ and using the definition of $\otimes^\ch$, we have a morphism of sheaves on $X^2$
\begin{align*}
j_{*} j^* \left(\CB_X \boxtimes \CB_X\right) \to \Delta_{!}\CB_X,
\end{align*}
which we will also denote by $\mu_\CB$. (Here $\Delta: X \to X^2$ is the diagonal embedding and $j = j(\id)$ is the complementary open embedding.)

\section{A preliminary definition for \'etale pullback of a factorization space}
\label{sec: preliminary definition}
Let $\left\{ \CY_{Y^I} \to Y^I \right\}$ be a factorization space over a smooth variety $Y$, and let $\phi: X \to Y$ be an \'etale morphism. We wish to define a factorization space $\left\{ \CY_{X^I} \to X^I \right\}$ over $X$, the \emph{pullback of $\CY_{\Ran{Y}}$ along $\phi$}. The first thing we could try is the following:
\begin{align*}
\CY_{X^I} \defeq X^I \times_{Y^I} \CY_{Y^I},
\end{align*}
where the map $X^I \to Y^I$ is just the $I$-fold product of $\phi$. However, in general, this does not give a factorization space.

Indeed, consider the set 
\begin{align*}
Z_\phi \defeq \left\{(x_1,x_2)\in X^2 \ \vert \ x_1 \ne x_2, \text{ but } \phi(x_1)=\phi(x_2) \right\}.
\end{align*}

For a general \'etale morphism $\phi$, this set is non-empty. Suppose that $(x_1,x_2)$ is a point of $Z_\phi$, and let $y = \phi(x_1) = \phi(x_2) \in Y$. Consider the fibre of $\CY_{X^2}$ over $(x_1, x_2)$: by definition, it is the fibre $\CY_{Y^2,(y,y)}$ of $\CY_{Y^2}$ over the point $\phi^2(x_1,x_2)=(y,y)$. By assumption, $\CY_{Y^2} \vert_{\Delta(Y)}$ is isomorphic to $\CY_Y$, so that this fibre is $\CY_{Y,y}$. 

However, if $\left\{\CY_{X^I}\right\}$ were in fact a factorization space, we would have 
\begin{align*}
\CY_{X^2,(x_1, x_2)} \simeq \CY_{X, x_1} \times \CY_{X, x_2} \simeq \CY_{Y, y} \times \CY_{Y, y}.
\end{align*}
It follows that $\left\{\CY_{X^I} \right\}$ defined as above does \emph{not} give a factorization space unless $\phi$ is injective. 

\begin{observation}
Note, however, that the axioms of a factorization space only fail to hold on the set $Z_\phi$: it is straightforward to check that Ran's condition holds on $\Delta(X) \subset X \times X$ and that the factorization condition is satisfied on $U_X \setminus Z_\phi$. 

Note also that because $\phi$ is \'etale, $X \to X \times_Y X$ is an open embedding. It follows that $Z_\phi = X \times_Y X \setminus X$ is closed in $X \times_Y X$, and hence also in $X \times X$. Therefore, the complement $V_\phi$ of $Z_\phi$ gives an open neighbourhood of the diagonal in $X\times X$.  

Recall from the introduction that the interesting data of a factorization seems to be concentrated near the diagonal. Thus, although our definition of the pullback did not work over all of $X^I$, there is reason to hope that it is a good definition on an open subscheme of $X^I$ for each $I$, and that this data is enough to completely determine the rest of the definition. 

We formalize this intuition in the following section. 
\end{observation}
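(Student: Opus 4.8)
The statement collects two geometric facts about the \'etale morphism $\phi$ together with two partial verifications of the factorization axioms for the na\"ive pullback $\CY_{X^I}\defeq X^I\times_{Y^I}\CY_{Y^I}$, and I would establish them in that order. For the geometric part I would begin from the standard characterization of unramifiedness: a morphism is unramified exactly when its relative diagonal is an open immersion. Since $\phi$ is \'etale it is in particular unramified, so $\Delta_{X/Y}\colon X\to X\times_Y X$ is an open immersion whose set-theoretic image is precisely the locus where the two projections to $X$ coincide; its complement is therefore $Z_\phi$, which is thus closed in $X\times_Y X$. Next, the canonical morphism $X\times_Y X\to X\times X$ is the base change along $\phi\times\phi$ of the diagonal $Y\to Y\times Y$, and the latter is a closed immersion because $Y$, being a variety, is separated; hence $X\times_Y X\to X\times X$ is a closed immersion and $Z_\phi$ is closed in $X\times X$ as well. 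Finally $\Delta(X)\subset X\times X$ is disjoint from $Z_\phi$ by the very definition of the latter, so $V_\phi=X\times X\setminus Z_\phi$ is an open subscheme containing the diagonal.

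For Ran's condition I would simply chase fiber products. Unwinding the na\"ive pullback one has $\CY_{X^J}=X^J\times_{Y^J}\CY_{Y^J}$, while
\begin{align*}
X^J\times_{X^I}\CY_{X^I}\;\cong\;X^J\times_{X^I}\bigl(X^I\times_{Y^I}\CY_{Y^I}\bigr)\;\cong\;X^J\times_{Y^I}\CY_{Y^I}\;\cong\;X^J\times_{Y^J}\bigl(Y^J\times_{Y^I}\CY_{Y^I}\bigr),
\end{align*}
and under these identifications $\nu_\alpha^X$ is visibly the base change of $\nu_\alpha^Y\colon\CY_{Y^J}\to Y^J\times_{Y^I}\CY_{Y^I}$ along the (\'etale) morphism $X^J\to Y^J$. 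Since equivalences of indschemes are stable under base change, $\nu_\alpha^X$ is an equivalence, and associativity of $\nu^X$ is inherited from that of $\nu^Y$ for the same reason. This holds for every $\alpha\colon I\surj J$, and specializing to $I=\{1,2\}\surj J=\{1\}$ gives the case $\Delta(X)\subset X\times X$ named in the statement (and in particular shows $\CY_{X^2}\vert_{\Delta(X)}\simeq\CY_X$, consistently with the body of section \ref{sec: preliminary definition}). The remaining data of item (1) of Definition \ref{factorization space} transports without incident: $\phi^{\times I}$ is formally \'etale so the formally integrable connection pulls back, and $\CY(\alpha)^X=\mathrm{pr}\circ\nu_\alpha^X$ remains indproper, being the equivalence $\nu_\alpha^X$ postcomposed with the base change along $f_I^X$ of the closed immersion $\Delta(\alpha)\colon X^J\to X^I$.

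The factorization condition is the step where $Z_\phi$ genuinely intervenes, and I would treat it as the heart of the argument. For $\alpha\colon I\surj J$ let $U(\alpha)\subset X^I$ be as in Definition \ref{factorization space} and write $U(\alpha)_Y\subset Y^I$ for its analogue downstairs. The morphism $\phi^{\times I}$ does \emph{not} carry $U(\alpha)$ into $U(\alpha)_Y$: a point $(x_i)\in U(\alpha)$ maps outside $U(\alpha)_Y$ exactly when there exist $i_1,i_2$ with $\alpha(i_1)\neq\alpha(i_2)$, hence $x_{i_1}\neq x_{i_2}$ already, yet $\phi(x_{i_1})=\phi(x_{i_2})$, that is, exactly when the image of $(x_i)$ under some coordinate projection $X^I\to X^2$ lands in $Z_\phi$. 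As $Z_\phi$ is closed (first step), this bad locus is a finite union of preimages of a closed set, so it is closed in $U(\alpha)$, and its complement---which in the notation of the statement is $U_X\setminus Z_\phi$---is an open subscheme on which $\phi^{\times I}$ restricts to a morphism into $U(\alpha)_Y$. Pulling back the equivalence $d_\alpha^Y$ along this morphism produces the required equivalence $d_\alpha^X$ over $U_X\setminus Z_\phi$, and its associativity, as well as its compatibility with the maps $\nu_\beta$, follow by base change from the corresponding identities over $Y$, precisely as spelled out in Example \ref{eg: compatibility of d}.

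I do not anticipate a serious obstacle. Once the one conceptual point is isolated---that the only failure of $\phi^{\times I}$ to respect the loci $U(\alpha)$ comes from pairs of coordinates lying in $Z_\phi$, and that $Z_\phi$ is closed because $\phi$ is unramified and $Y$ is separated---everything else is bookkeeping with iterated fiber products. The only mild care required is (a) verifying that the various iterated base changes identify as asserted, which is routine, and (b) checking that the associativity and compatibility constraints ever only ask one to restrict already-defined structure maps to smaller open subschemes, and never to extend them across $Z_\phi$; this is exactly what makes ``away from $Z_\phi$'' a self-consistent domain on which to work, and is the phenomenon that the following sections exploit.
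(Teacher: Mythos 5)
Your argument is correct and takes essentially the same route as the paper: closedness of $Z_\phi$ follows from the open immersion $X \to X \times_Y X$ (\'etaleness) together with separatedness of $Y$, and the Ran and factorization isomorphisms away from $Z_\phi$ are obtained by base change of the corresponding structure maps of $\CY_{\Ran{Y}}$, which is exactly the ``straightforward check'' the observation alludes to. You in fact verify slightly more than is claimed (Ran's condition along all partial diagonals and the factorization condition for arbitrary $I$, with the bad locus identified as the pairwise preimages of $Z_\phi$), which is consistent with, and a harmless strengthening of, the paper's statement.
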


\section{Weak factorization spaces}
\label{sec: weak factorization spaces}
\begin{defn}
\label{def: weak factorization space}
A \emph{weak factorization space over $X$} is given by the following data:

\begin{enumerate}
\item For each finite set $I$, we require an open subscheme $W(I) \subset X^I$, containing the diagonal $\Delta(X) \subset X^I$. We require an indscheme $g_I: \CZ_I \to W(I)$, equipped with a formally integrable connection over $W(I)$.
\item For any surjection $\alpha: I \surj J$, we require an open subscheme $R(\alpha)$ of $W(J) \bigcap \Delta(\alpha)^{-1}(W(I))$ in $X^J$, containing the diagonal $\Delta(X)$. We require an isomorphism $\tilde{\nu}_\alpha$ between the restrictions of $\CZ_I$ and $\CZ_J$ to $R(\alpha)$. In other words, Ran's condition must hold on $R(\alpha)$. 
\item For any surjection $\alpha: I \surj J$, giving rise to a partition of $I$ as $\bigsqcup_{j \in J} I_j$, we require an open subscheme $F(\alpha)$ of $W(I) \bigcap \left(\prod_{j \in J} W(I_j)\right)$ in $X^I$, containing the diagonal $\Delta(X)$. We require an isomorphism $\tilde{d}_\alpha$ of the restrictions of $\CZ_{I}$ and $\prod_{j \in J} \CZ_{I_j}$ to $F(\alpha) \bigcap U(\alpha)$. In other words, the factorization condition must hold on $F(\alpha)$. 
\item We require compatibilities between the morphisms $\tilde{\nu}$ and $\tilde{d}$ with each other and under composition of surjections, wherever these compositions make sense. \footnote{In fact, one could weaken this condition, and require only that the compatibilities exist only on an open subset of the range where the compositions are defined, again containing the diagonal. The resulting category turns out to be equivalent to the one defined here, but requires extra layers of notation, so we will focus on the notationally simpler version, which is sufficiently general for our purposes.}
\end{enumerate}
\end{defn}

\begin{defn}
\label{def: morphism of weak factorization spaces}
A \emph{morphism} $F$ between two weak factorization spaces 
\begin{align*}
(\CZ_I, W(I), \ldots) \text{ and } (\CZ^\prime_I, W^\prime(I), \ldots)
\end{align*}
is a collection of morphisms $F_I: \CZ_I\vert_{V(I)} \to \CZ^\prime\vert_{V(I)}$ over some open subschemes $V(I) \subset W(I) \bigcap W^\prime(I)$ which are required to contain the small diagonal. These morphisms must be compatible with the morphisms $\tilde{\nu}, \tilde{\nu}^\prime$ and $\tilde{d}, \tilde{d}^\prime$ wherever the compositions make sense.

Let $\WFSp{X}$ denote the category of weak factorization spaces over $X$. 
\end{defn}

In particular, a factorization space $\left\{\CY_{X^I} \to X^I \right\}$ is a weak factorization space, where we can take $W(I)=X^I$ for each $I$. We can also take $R(\alpha)$ to be all of $X^J$, and $F(\alpha)$ to be $X^I$ for each surjection $\alpha: I \surj J$. Furthermore, a morphism of factorization spaces yields a morphism of the corresponding weak factorization spaces, where we can take $V(I)$ to be all of $X^I$ for each $I$. 

\begin{defn}
In other words, we have a forgetful functor 
\begin{align*}
\Weak: \FSp{X} \to \WFSp{X}.
\end{align*}
\end{defn}

\begin{thm}
\label{thm: weak factorization spaces are factorization spaces}
The functor $\Weak$ is an equivalence of the categories of ordinary and weak factorization spaces.
\end{thm}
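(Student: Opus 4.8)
The plan is to construct an explicit quasi-inverse $E\colon\WFSp{X}\to\FSp{X}$, the \emph{extension} functor, together with natural isomorphisms $E\circ\Weak\cong\id_{\FSp{X}}$ and $\Weak\circ E\cong\id_{\WFSp{X}}$. The guiding principle is exactly the one flagged in the introduction: away from the small diagonal $\Delta(X)\subset X^I$ the factorization axioms \emph{force} $\CY_{X^I}$ to be glued out of copies of the $\CY_{X^{I'}}$ for strictly smaller index sets $I'$, whereas a neighbourhood of $\Delta(X)$ is precisely the data recorded by a weak factorization space; so the extension is essentially unique and the content of the argument lies in checking that this forced gluing is consistent. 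As a preliminary I would normalize: for a fixed $I$ the construction of $\CY_{X^I}$ involves only the finitely many surjections out of $I$ and finitely many of the compatibilities of Definition~\ref{def: weak factorization space}(4), so one may shrink each $W(I)$, and then the relevant $R(I/J)$ and $F(I;(I_j))$, to an open $W'(I)\supseteq\Delta(X)$ over which every structure isomorphism $\tilde\nu_\alpha,\tilde d_\alpha$ and every instance of (4) used below is defined. This changes neither the isomorphism class of the weak factorization space (isomorphisms in $\WFSp{X}$ are witnessed over arbitrary neighbourhoods of the small diagonal) nor, up to canonical isomorphism, the output; and since the data is functorial in $I\in\fSet$ and the construction canonical, $E$ is a well-defined functor.

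Next build $E(\CZ)_{X^I}=:\CY_{X^I}$ by induction on $|I|$. For $|I|=1$ one has $W(\{*\})=X$, and we set $\CY_{X^{\{*\}}}:=\CZ_{\{*\}}$. For the inductive step, note that $X^I$ is covered by $W'(I)$ together with the opens $U(\alpha)$ for the surjections $\alpha\colon I\surj J$ with $|J|=2$: the latter cover $X^I\setminus\Delta(X)$ (given a point off $\Delta(X)$, split $I$ according to equality with a fixed coordinate), and $W'(I)\supseteq\Delta(X)$. Over each such $U(\alpha)$ place the indscheme $U(\alpha)\times_{X^I}\bigl(\prod_{j\in J}\CY_{X^{I_j}}\bigr)$, built from the already-constructed factorization spaces of the blocks; on an overlap $U(\alpha)\cap U(\beta)$ these are identified by applying the \emph{honest} factorization isomorphisms $d$ of the smaller factorization spaces to the common refinement $\alpha\wedge\beta$ of the two partitions, the cocycle condition being the associativity of those $d$'s (Example~\ref{eg: compatibility of d}). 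Over $W'(I)$ place $\CZ_I$, with its connection. Finally glue the two pieces along $W'(I)\cap U(\alpha)\subseteq W'(I)\setminus\Delta(X)$ using $\tilde d_\alpha$ and the inductive identifications $\CZ_{I_j}\cong\CY_{X^{I_j}}\vert_{W'(I_j)}$; consistency of these gluings with one another and with the ones already made is exactly the content of the compatibilities~(4), available on $W'(I)$ by the normalization step. The resulting $\CY_{X^I}$ inherits a formally integrable connection glued from the pieces.

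It remains to equip $\{\CY_{X^I}\}$ with the structure maps of a factorization space and to produce the two natural isomorphisms. The map $\nu_\alpha$ for $\alpha\colon I\surj J$ is assembled from $\tilde\nu_\alpha$ near $\Delta(X)$ and from factorization of the smaller spaces along the partial diagonal away from it — agreeing on the overlap by~(4) — and $d_\alpha$ is obtained the same way; the map $\CY(\alpha)$ is then the composite of $\nu_\alpha$ with the projection $X^J\times_{X^I}\CY_{X^I}\to\CY_{X^I}$, which is a base change of the closed embedding $\Delta(\alpha)$ and hence indproper. Associativity and mutual compatibility of $\nu$ and $d$, and the presentation $\CY\simeq\colim_{\fSet^\op}\CY_{X^I}$, follow from the weak data plus the inductive hypothesis. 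For $\Weak\circ E\cong\id_{\WFSp{X}}$ one uses that $E(\CZ)_{X^I}\vert_{W'(I)}\cong\CZ_I\vert_{W'(I)}$ compatibly with all structure, which is an isomorphism in $\WFSp{X}$. For $E\circ\Weak\cong\id_{\FSp{X}}$: for an honest $\CY$, the indschemes $\CY_{X^I}$ and $E(\Weak\,\CY)_{X^I}$ agree on a neighbourhood of $\Delta(X)$ by construction and agree on $X^I\setminus\Delta(X)$ since both are determined there by factorization from strictly smaller sets, so by induction on $|I|$ they agree, compatibly with structure. The same two-step argument applied to morphisms rather than objects shows directly that $\Weak$ is full and faithful: a morphism of the weak images is defined near $\Delta(X)$ and is forced on $X^I\setminus\Delta(X)$ by compatibility with factorization and the inductive hypothesis on blocks, and the two descriptions match on the overlap.

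The only genuine difficulty is the bookkeeping of open subschemes: one must fix the $W'(I)$ (and the shrunk $R$'s and $F$'s) coherently enough that every structure isomorphism and every instance of the compatibilities~(4) is available on precisely the overlaps where the gluing invokes it, and one must apply ``factorization in stages'' (Example~\ref{eg: compatibility of d}) repeatedly to verify the cocycle conditions. Once the system of opens is chosen, every remaining verification is routine.
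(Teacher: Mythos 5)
Your proposal is correct and takes essentially the same route as the paper: both construct a quasi-inverse gluing/extension functor by induction on $|I|$, covering $X^I$ by a neighbourhood of the small diagonal together with the opens $U(\alpha)$, defining transitions on overlaps $U(\alpha)\cap U(\beta)$ via the factorization isomorphisms of the already-constructed smaller spaces on the common refinement and on the diagonal piece via $\tilde d_\alpha$, and checking the cocycle conditions using the compatibility of Example \ref{eg: compatibility of d}. The differences (restricting the cover to two-block surjections, and your preliminary shrinking of the $W(I)$'s where the paper instead works over the finite intersection $F(I)$ of the $F(I;(I_j))$'s) are cosmetic and do not change the argument.
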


\begin{proof}
We will show that $\Weak$ is an equivalence by exhibiting a quasi-inverse,
\begin{align*}
\Glue: \WFSp{X} \to \FSp{X}.
\end{align*}

Let $\CZ = (\CZ_I, W(I), \ldots)$ be a weak factorization space. Our goal is to build a factorization space $\Glue(\CZ) = \left\{\CY_{X^I} \to X^I\right\}$ by gluing together the pieces of $\CZ$ along the isomorphisms $\tilde{\nu}$ and $\tilde{d}$. We will do this by induction on $\vert I \vert$. 

The case $I= \left\{\pt\right\}$ is trivial: we have $W(\pt) = X$, and we take $\CY_{X} \defeq \CZ_{\left\{\pt\right\}}$.

Let us also carry out the case $I = \left\{1, 2\right\}$ explicitly, to motivate the induction step. First notice that we have an open cover of $X^2$ given by $F(\alpha) \cup U$, where $\alpha = \id_I$ and $U=U(\alpha)= X^2 \setminus \Delta(X)$. So to define a space $\CY_I$ on $X^2$ it suffices to define a space on each of $F= F(\alpha)$ and $U$, and then to provide an isomorphism of these spaces over the intersection. (That indschemes can be defined locally in this way, as we are used to doing for schemes, follows in a straightforward way from the characterization of an indscheme as a stack which is the filtered colimit of its closed subschemes.) It is clear how to proceed: we take $(\CZ_I)\vert_{F}$ over $F$, and $\left(\CY_X \times \CY_X\right)\vert_{U}$ over $U$. Then the isomorphism is given by $\tilde{d}_{\alpha}$, using the fact that $\CY_X = \CZ_{\left\{\pt\right\}}$. 

Let us now take $n \ge 3$ and assume that we have constructed the spaces $\CY_{X^K} \to X^K$ for all $K$ with $\vert K \vert \le n-1 $, and moreover that we have constructed the isomorphisms $\nu_\alpha$ and $d_\alpha$ for all surjections between sets of size at most $n-1$. We suppose that the $\nu_\alpha$ and $d_\alpha$ satisfy the compatibility conditions with all $\nu$ and $d$ already defined.  Finally, we assume that if $F(K)$ denotes the intersection of all $F(\beta)$ over all surjections $\beta$ from $K$ to a strictly smaller set, we have $\CY_{X^K} \vert_{F(K)} = \CZ_{K} \vert_{F(K)}$. Let $I$ be a finite set of size $n$. 

We would like to use the same idea as in the case $n=2$. To begin, we need to find an open cover of $X^I$. Let $F(I)$ denote the intersection of the open sets $F(\alpha)$ for each surjection $\alpha$ from $I$ to a strictly smaller set; $F(I)$ is a finite intersection of open neighbourhoods of the diagonal $\Delta(X)$ in $X^I$, so it is again an open set containing the diagonal. We cannot express the complement $X^I \setminus \Delta(X)$ as a set $U(\alpha)$ for any particular $\alpha$, but we notice instead that it is the union of all the sets $U(\alpha)$ where $\alpha$ runs over all surjections from $I$ to any set of size at least $2$.

Next we need to specify the components of the space $\CY_{X^I}$ living over each piece of the open cover. Over $F(I)$, we take the restriction of $\CZ_I$ to $F(I)$. Over $U(\alpha)$, we take the restriction of $\prod_{j \in J} \CY_{X^{I_j}}$ to $U(\alpha)$. Note that because of the assumption that $\vert J \vert \ge 2$, each $I_j$ has size strictly less than $n$, and hence $\CY_{X^{I_j}}$ is defined, by the induction hypothesis. 

The next step is to provide isomorphisms between these pieces on the intersections of any pair of sets in the open cover. First suppose we have $\alpha: I \surj J$, $\beta: I \surj K$, and consider $U(\alpha) \cap U(\beta)$. Define
\begin{align*}
J \star K \defeq \{ (j,k) \in J \times K \ \vert\ I_j \cap I_k \ne \emptyset \}.
\end{align*}
By construction, there is a surjection from $I$ to $J \star K$, which we will denote by $\alpha \star \beta$; moreover, the maps $\alpha$ and $\beta$ obviously factor through $\alpha \star \beta$. Let us denote by $I_{jk}$ the intersection $I_j \cap I_k$, whenever it is non-empty; it is of course equal to $I_{(j,k)}$. Notice that $U(\alpha) \cap U(\beta) = U(\alpha \star \beta)$. Let us also fix the following notation: for $k \in K$,
\begin{align*}
J(k) &\defeq \left\{j \in J \ \vert\ (j,k) \in J \star K \right\};\\
\alpha_k & \defeq \alpha\vert_{I_k} : I_k \surj J(k).
\end{align*}
Similarly, for fixed $j \in J$, we define a subset $K(j)$ of $K$, and the restriction $\beta_j$ of $\beta$ to $I_j$.

To define the isomorphism $\phi_{\alpha,\beta}$ between the restrictions of the spaces 
\begin{align*}
\left(\prod_{j \in J} \CY_{X^{I_j}} \right) \vert_{U(\alpha \star \beta)} \quad \text{ and}   \quad \left(\prod_{k \in K} \CY_{X^{I_k}} \right) \vert_{U(\alpha \star \beta)},
\end{align*}
we will define an isomorphism between each of these and $\left(\prod_{(j,k) \in J \star K} \CY_{X^{I_{jk}}} \right) \vert_{U(\alpha \star \beta)}$. Indeed, notice that $U(\alpha \star \beta) \subset \prod_{k \in K} U(\alpha_k)$; it follows that
\begin{align*}
\left(\prod_{k \in K} \CY_{X^{I_k}} \right) \vert_{U(\alpha \star \beta)} & = \left(\prod_{k \in K} \left(\CY_{X^{I_k}} \right) \vert_{U(\alpha_k)} \right) \vert_{U(\alpha \star \beta)}.
\end{align*}

Now by the induction hypothesis we have isomorphisms 
\begin{align*}
d_{\alpha_k} : \left(\prod_{j \in J(k)} \CY_{X^{I_{jk}}} \right) \vert_{U (\alpha_k)} \EquivTo (\CY_X^{I_k}) \vert_{U(\alpha_k)}.
\end{align*}
Taking the product of the $d_{\alpha_k}$ over all $k \in K$ and restricting to $U(\alpha \star \beta)$ gives an isomorphism from
\begin{align*}
\left(\prod_{k \in K} \left(\prod_{j \in J(k)} \CY_{X^{I_{jk}}} \right) \vert_{U(\alpha_k)} \right) \vert_{U (\alpha \star \beta)} = \left( \prod _{(j,k) \in J \star K} \CY_{X^{I_{jk}}} \right)_{U(\alpha\star\beta)}
\end{align*}
to 
\begin{align*}
\left(\prod_{k \in K} \CY_{X^{I_k}} \right) \vert_{U(\alpha \star \beta)}.
\end{align*}

Let us denote this isomorphism by $\phi_{\alpha}^\beta$. Swapping the roles of $J$ and $K$, we also obtain an isomorphism $\phi_{\beta}^\alpha$, and we define the desired compatibility isomorphism $\phi_{\alpha,\beta}$ to be the composition $\phi^\beta_\alpha \circ (\phi_\beta^\alpha)^{-1}$.

It is immediate from this construction that $\phi_{\alpha, \alpha} = \id$, and that $\phi_{\beta, \alpha} = \phi_{\alpha, \beta}^{-1}$. The remaining compatibility condition to check is the compatibility of the isomorphisms on triple overlaps: we need to show that
\begin{align*}
\phi_{\beta, \gamma} \circ \phi_{\alpha, \beta} = \phi_{\alpha, \gamma}
\end{align*}
on $U(\alpha \star \beta \star \gamma)$. 

For this we use the compatibility of the morphisms $d_\alpha$ with respect to composition of the surjections $\alpha$. More specifically, we have the following commutative diagram (where all spaces and morphisms are restricted to $U(\alpha \star \beta \star \gamma)$, and all morphisms are isomorphisms, although we have omitted this from the notation):
\begin{center}
\begin{tikzpicture}[>=angle 90]
\matrix(a)[matrix of math nodes, row sep=3em, column sep=2em, text height=1.5ex, text depth=0.25ex]
{                                            & \prod_{k \in K} \CY_{X^{I_k}}                           &                                \\
\prod_{(j,k) \in J \star K} \CY_{X^{I_{jk}}} &                                                         & \prod_{(k,l) \in K \star L} \CY_{X^{I_{kl}}} \\ 
                                             & \prod_{(j,k,l) \in J \star K \star L} \CY_{X^{I_{jkl}}} &                                \\
\prod_{j \in J} \CY_{X^{I_j}}                &                                                         & \prod_{l \in L} \CY_{X^{I_l}}  \\
                                             & \prod_{(j,l) \in J \star L} \CY_{X^{I_{jl}}}            &                                \\};
\path[->, font=\scriptsize]
 (a-3-2) edge node[above right]{$\prod_{j,k} d_{\gamma \vert_{I_{jk}}}$} (a-2-1)
         edge node[right]{$\prod_{k} d_{\alpha \star \gamma \vert_{I_k}}$} (a-1-2)
         edge node[below right]{$\prod_{k,l} d_{\alpha \vert_{I_{kl}}}$} (a-2-3)
         edge node[below]{$\prod_{l} d_{\alpha \star \beta \vert_{I_l}}$} (a-4-3)
         edge node[left]{$\prod_{j,l} d_{\beta \vert_{I_{jl}}} $} (a-5-2)
         edge node[above left]{$\prod_{j} d_{\beta \star \gamma \vert_{I_j}}$} (a-4-1)
 (a-2-1) edge node[above left]{$\prod_k d_{\alpha \vert_{I_k}}$} (a-1-2)
         edge node[left]{$\prod_{j} d_{\beta \vert_{I_j}}$} (a-4-1)
 (a-2-3) edge node[above right]{$\prod_k d_{\gamma \vert_{I_k}}$} (a-1-2)
         edge node[right]{$\prod_{l} d_{\beta \vert_{I_l}}$} (a-4-3)
 (a-5-2) edge node[below left]{$\prod_{j} d_{\gamma \vert_{I_j}}$} (a-4-1)
         edge node[below right]{$\prod_{l} d_{\alpha \vert_{I_l}}$} (a-4-3);
\end{tikzpicture}
\end{center}

The commutativity of each of the six triangles follows precisely from the compatibility condition described in Example \ref{eg: compatibility of d}. For example, the composition of surjections
\begin{align*}
I_k \xrightarrowdbl{\alpha \star \gamma \vert_{I_k}} (J \star L)(k) \surj J(k)
\end{align*}
is equal to $\alpha\vert_{I_k}$, and Example \ref{eg: compatibility of d} implies that
\begin{align*}
d_{\alpha \star \gamma \vert_{I_k}} = d_{\alpha \vert_{I_k}} \circ \prod_{j \in J(k)} d_{\gamma \vert_{I_{jk}}}.
\end{align*}
It follows that the two ways of tracing around the outside of the diagram from $\prod_{j \in J} \CY_{X^{I_j}}$ to $\prod_{l \in L} \CY_{X^{I_l}}$ are equal. But going around the top is, by definition, $\phi_{\beta, \gamma} \circ \phi_{\alpha, \beta}$, while going along the bottom gives $\phi_{\alpha, \gamma}$. 

Finally, we need to define compatibility isomorphisms on the overlaps
\begin{align*}
U(\alpha)_0 \defeq U(\alpha) \cap F(I).
\end{align*}
Since $U(\alpha)_0$ is contained in $F(\alpha) \cap U(\alpha)$, we have the weak factorization isomorphism
\begin{align*}
\tilde{d}_\alpha: \left(\prod_{j \in J} \CZ_{I_j}\right) \vert_{U(\alpha)_0} \EquivTo \left(\CZ_I\right) \vert_{U(\alpha)_0}. 
\end{align*}
On the other hand, by the induction hypothesis, we have
\begin{align*}
\left(\prod_{j \in J} \CZ_{I_j}\right) \vert_{U(\alpha)_0} = \left(\prod_{j \in J} \CY_{X^{I_j}} \right) \vert_{U(\alpha)_0}.
\end{align*}
So we can take $\phi_{\alpha,0}$ to be $\tilde{d}_\alpha$, and $\phi_{0,\alpha}$ to be its inverse. Compatibility of the morphisms $\tilde{d}$ with respect to composition ensures in a similar way to the above arguments that these isomorphisms are compatible on triple overlaps $U(\alpha \star \beta)_0$. 

Therefore, we have succeeded in building a space $\CY_{X^I}$ over $X^I$ which satisfies 
\begin{align*}
\left(\CY_{X^I}\right) \vert_{F(I)} = \left(\CZ_I\right) \vert_{F(I)},
\end{align*}
and which comes equipped with the necessary isomorphisms $d_\alpha$ and $\nu_\alpha$. 

This completes the induction step, and hence the construction of $\Glue(\CZ)$. It is clear from the construction that a morphism
\begin{align*}
\CZ \to \CZ^\prime
\end{align*}
of weak factorization spaces gives rise to a morphism $\Glue(\CZ) \to \Glue(\CZ^\prime)$ of factorization spaces. It is also immediate that $\Weak \circ \Glue$ is equivalent to the identity functor on $\WFSp{X}$, and conversely that $\Glue \circ \Weak$ is equal to the identity functor on $\FSp{X}$. 
\end{proof}

\section{Weak factorization algebras}
\label{sec: weak factorization algebras}
In this section, we will see that we can make analogous definitions for factorization algebras. We again have a forgetful functor from the category of factorization algebras over $X$ to the category of weak factorization algebras over $X$. The proof that it is an equivalence is almost completely parallel to the above; the key difference is that the factorization and Ran isomorphisms for a factorization algebra are only required to be compatible \emph{up to natural isomorphisms}, which are themselves required to satisfy higher compatibilities, whereas for a factorization space, the compatibilities are strict. This means that in gluing the pieces of a weak factorization algebra to get an ordinary factorization algebra, we must check compatibility conditions over overlaps of multiple sets of the open cover, not just double and triple overlaps. However, since our open covers are all finite, this process does terminate and we can conclude that all the desired compatibility isomorphisms exist and satisfy the required properties. Let us spell out these subtleties more carefully. We begin by reviewing the definition of a factorization algebra over a variety $X$ given by Francis and Gaitsgory in \cite{FG}, so that we can see how to adjust it to give a weak version.

\begin{defn}[\cite{FG} 2.4.7]\label{defn: factorization algebra}
First, we require the data of a $D$-module $\CA$ over $\Ran{X}$, which is a cocommutative coalgebra object in the symmetric monoidal category $(\CD(\Ran{X}), \otimes^\ch)$. The comultiplication map is denoted 
\begin{align*}
c: \CA \to \CA \otimes^\ch \CA;
\end{align*}
pulling back along any $\Delta_I: X^I \to \Ran{X}$ gives a map
\begin{align*}
\CA_I \to \Delta_I^!(\CA \otimes^\ch \CA) \simeq \bigoplus_{I = I_1 \sqcup I_2} j(\alpha)_* j(\alpha)^* \left( \CA_{I_1} \boxtimes \CA_{I_2}\right),
\end{align*}
where the sum is over all partitions of $I$ into two pieces, and $\alpha$ is the corresponding surjection $I \surj \{ 1, 2 \}$.

Adjunction induces for each such $\alpha$ a map $c(\alpha): j(\alpha)^*(\CA_I) \to j(\alpha)^* (\CA_{I_1} \boxtimes \CA_{I_2})$, and we say that the coalgebra $(\CA, c)$ is a \emph{factorization algebra} if each of these maps (and their analogues for compositions of the comultiplication map) is an isomorphism. 
\end{defn}

To give a weak version of this definition, we will work with appropriate open subsets of the Ran space of $X$. 

\begin{defn}\label{defn: suitable open sets}
Suppose given a family $\left\{W(I) \subset X^I \right\}_{I \in \fSet}$ of open subsets, each containing the diagonal $X$, such that for any surjection $\alpha : I \surj J$, $W(J)$ is the fibre product $W(I) \times_{X^I} X^J$. (We will denote the resulting map $W(I) \to W(J)$ by $\Delta_W(\alpha)$.) Assume in addition that for any $\alpha: I \surj J$, $W(I)$ is contained in $\prod_{j \in J} W(I_j)$. Such a family will be called a \emph{suitable} family of open subsets. 

We can form the colimit $W \defeq \colim_{I \in \fSet^\op} W(I)$ (with structure maps which we will denote by $\Delta^I_W$), and there is a natural morphism $\lambda: W \to \Ran{X}$ which is an open embedding of pseudo-indschemes. We say that any $W$ having such a presentation is \emph{suitable}.  
\end{defn}

\begin{rmk}
These will be the open subsets on which we require the data of weak factorization algebras to be given. Notice that this condition on the $W(I)$ is much stronger than required for the formulation of the definition of a weak factorization space---it is our belief that a weaker condition could also be used in this setting to define an alternate category of weak factorization algebras, which would still be equivalent to the category of ordinary factorization algebras; the objects which we are defining here should perhaps only be called ``slightly weak'' factorization algebras. However, we stop with this definition, since it is significantly easier to formulate, and entirely sufficient for our purposes: the regions $W(I)$ over which factorization data behaves well for the \'etale pullback of a factorization algebra form a suitable family, as we will see in example \ref{eg: suitable sets}.

Indeed, we expect that there is an a priori still more general notion, which we could call a ``very weak'' factorization algebra or space, over which the factorization data is only required to be given at each stage over a formal neighbourhood of the diagonal; futhermore we conjecture that all of these categories are equivalent, and hence ultimately it does not matter how strict we are in specifying our $W(I)$, since as in the proof of our theorem the data will always extend to all of $X^I$. However, this is beyond the scope of this paper.
\end{rmk}

\begin{eg}\label{eg: suitable sets}
Let $\phi: X \to Y$ be an \'etale morphism, and define subschemes $V_\phi(I) \subset X^I$ by 
\begin{align} \label{eq: suitable sets for pullbacks}
V_\phi(I) \defeq \left\{ x^I \ \vert \ \phi(x^{i_1}) = \phi(x^{i_2}) \text{ if and only if } x^{i_1} = x^{i_2} \right\}.
\end{align}
For fixed $I$, $V_\phi(I)$ is the intersection of the sets
\begin{align*}
V^{I,i,j}_\phi \defeq \left\{ x^I \in X^I \ \vert \phi(x^i) = \phi(x^j) \Leftrightarrow x^i = x^j \right\},
\end{align*}
as $i,j$ run over all unordered pairs in $I$. Arguing as in section \ref{sec: preliminary definition}, we see that the fact that $\phi$ is \'etale implies that $V^{I,i,j}_\phi$ is open in $X^I$, and hence so is $V_\phi(I)$. It is also clear that it contains the diagonal $\Delta(X)$. It is straightforward to check that the family $\left\{V_\phi(I)\right\}$ satisfies the conditions above, and hence give a suitable pseudo-indscheme.  

Furthermore, it is easy to check that if $\left\{W(I) \subset Y^I\right\}$ is a family of subschemes which is suitable with respect to $Y$, then we can define $\phi^*W(I) \defeq V_\phi(I) \cap (\phi^I)^{-1} W(I)$, and we obtain a family which is suitable with respect to $X$. This fact will be used when we define the \'etale pullback of a weak factorization algebra.  
\end{eg}

We will use implicitly throughout the following the identifications $\CD(W) \simeq  \lim_{I \in \fSet^\op} \CD(W(I)) \simeq \colim_{I \in \fSet^\op} \CD(W(I))$. We claim that this category $\CD(W)$ has a symmetric monoidal structure which is compatible with the chiral monoidal structure of $\CD(\Ran{X})$ under the restriction $\lambda^* : \CD(\Ran{X}) \to \CD(W)$; for this reason we will again denote the tensor product by $\otimes^\ch$. As in \cite{FG}, to define the monoidal structure it is enough to define for each $ J \in \fSet$ a natural transformation between the two functors
\begin{align*}
(\fSet^\op) ^{\times J} &\to \infty\text{-Cat} \\
(I_j)_{j \in J} &\mapsto \otimes_{j \in J} \CD(W(I_j)); \\
(I_j)_{j \in J} &\mapsto \CD\left(W\left(\bigsqcup_{j\in J} I_j\right)\right).
\end{align*}
This natural transformation is given for fixed $(I_j)_{j \in J}$ by the functor $\tau^W_{\alpha}$ which takes a collection of $\CD$-modules $\{M^{I_j} \in \CD(W(I_j))\}$ to the restriction of $j(\alpha)_* j(\alpha)^* \left( \boxtimes_{j \in K} M^{I_j} \right)$ from $\prod_{j \in J} W(I_j)$ to $W\left(\bigsqcup_{j \in J} I\right)$. (Here there is a slight abuse of notation: we use $j(\alpha)^*$ to denote the restriction from $\prod_{j \in J} W(I_j)$ to $U(\alpha) \cap W(I)$, and $j(\alpha)_*$ to denote the pushforward from $U(\alpha) \cap W(I)$ to $W(I)$.) 

It is straightforward to check from the definitions that $\lambda^*$ respects the chiral monoidal structures of $\CD(\Ran{X})$ and $\CD(W)$. Indeed, it suffices to observe that for any $\alpha: I \surj J$, the morphisms defining $\otimes^\ch$ on $\CD(\Ran{X})$ and $\CD(W)$, together with the morphisms defining $\lambda^*$, fit into a commutative diagram as follows:

\begin{center}
\begin{tikzpicture}[>=angle 90, bij/.style={above,sloped, inner sep=0.5pt}]
\matrix(a)[matrix of math nodes, row sep=2em, column sep=3em, text height=1.5ex, text depth=0.25ex]
{ \bigotimes_{j \in J} \CD(X^{I_j}) & \CD(X^I)\\
\bigotimes_{j \in J} \CD(W(I_j)) & \CD(W(I)). \\};
 \path[->, font=\scriptsize]
(a-1-1) edge node[above]{$\tau^X_\alpha$} (a-1-2)
(a-2-1) edge node[below]{$\tau^W_\alpha$} (a-2-2)
(a-1-1) edge node[left]{$\otimes \lambda_{I_j}^*$} (a-2-1)
(a-1-2) edge node[right]{$\otimes \lambda_{I}^*$} (a-2-2);
\end{tikzpicture}
\end{center}

Here the top horizontal arrow $\tau^X_\alpha$ is the functor used by Francis and Gaitsgory in defining the chiral tensor product on the Ran space, given on objects by 
\begin{align*}
\{M^{I_j} \in \CD(X^{I_j}) \} \mapsto j(\alpha)_*j(\alpha)^*(\boxtimes_{j} M^{I_j}).
\end{align*}
To check that the diagram is commutative, use the fact that the $\lambda_K^*$ are all restrictions along open embeddings. 

\begin{defn} A \emph{weak factorization algebra} on $X$ consists of a collection $\{W(I) \subset X^I \}$ as above, together with a cocommutative coalgebra object $(\CA, \tilde{c})$ of $(\CD(W), \otimes^\ch)$ such that for any $\alpha: I \surj J$, the corresponding maps $\tilde{c}_\alpha$ induced from $\tilde{c}$ are isomorphisms:
\begin{align*}
 j(\alpha)^* \CA_I \EquivTo j(\alpha)^* (\boxtimes_{j \in J} \CA_{I_j}) \in \CD(U(\alpha) \cap W(I)).
\end{align*} 
\end{defn}

\begin{rmk}
Let us take a moment to extract from the definition of a weak factorization algebra data which looks analogous to that comprising our definition of a weak factorization space. 
\begin{itemize}
\item The spaces $W(I)$ are given directly; likewise, we have for each $I$ a $\CD$-module $\CA_I = (\Delta_W^I)^! \CA$, which is the linear analogue of the space $\CZ_I \to W(I)$. 
\item The spaces $F(\alpha)$ are as large as possible, namely the intersection of $W(J)$ and $\Delta(\alpha)^{-1} W(I)$---but by the first compatibility condition on the $W$, this is exactly $W(J)$. The maps $\tilde{\nu}_\alpha: \CA_J \EquivTo \Delta_W(\alpha)^! \CA_I$ come from the structure of $\CA$ as a $\CD$-module on the pseudo-indscheme $W$. Likewise, this structure induces the higher compatibility maps between the $\tilde{\nu}$. 
\item The spaces $R(\alpha)$ are also as large as possible, a priori the intersection of $W(I)$ and $\prod_{j \in J} W(I_j)$, which is of course all of $W(I)$. The maps $\tilde{d}_\alpha$ are the inverses of the $\tilde{c}_\alpha$ from the definition of a weak factorization algebra. The higher compatibility morphisms between the $\tilde{d}_\alpha$ come from the coassociativity of the coproduct. (In particular, the domains $R$ on which they are defined are always as large as possible.)
\item The higher compatibilities between $\tilde{\nu}$ and $\tilde{d}$ are constructed from the structure of $\tilde{c}$ as a morphism in $\CD(W)$. Let us illustrate with the example of the first level of compatibility data: suppose that $\gamma: I \surj K$ factors through $J$ as $\beta \circ \alpha$. The restriction $(\Delta^I_W)^!\tilde{c}^K$ of the $K$th iterated comultiplication map gives rise to the factorization isomorphism $\tilde{c}_\gamma: j(\gamma)^* \CA_I \EquivTo j(\gamma)^* \left( \boxtimes_{k \in K} \CA_{I_k} \right)$ over $F(\gamma) \cap U(\gamma)$. Now the fact that $\tilde{c}^K$ is a morphism of $\CD$-modules on $W = \colim_{I \in \fSet^\op} W(I)$ means that we must have 2-isomorphism between $\Delta_W(\alpha)^!(\Delta_W^I)^! \tilde{c}^K$ and $(\Delta_W^J)^!\tilde{c}^K$. It is easy to check that since $\gamma = \beta \circ \alpha$, the intersection of $X^J \subset X^I$ and $U(\gamma) \subset X^I$ is just $U(\beta) \subset X^J$, and therefore that $U(\beta) \cap W(J)$ is just the intersection of $U(\gamma) \cap W(I)$ with $X^J \subset X^I$; we let $\Delta(\alpha)_\vert$ denote the the inclusion of $U(\beta) \cap W(J)$ in $U(\gamma) \cap W(I)$. Then the 2-isomorphism mentioned above gives rise to a 2-isomorphism making the following diagram (defined over $U(\beta) \cap W(J)$) commute:
\begin{center}
\begin{tikzpicture}[>=angle 90, bij/.style={above,sloped, inner sep=0.5pt}]
\matrix(a)[matrix of math nodes, row sep=2em, column sep=3em, text height=1.5ex, text depth=0.25ex]
{ \Delta(\alpha)_\vert^!j(\gamma)^*\CA_I & \Delta(\alpha)_\vert^! j(\gamma)^* \left(\boxtimes_{k \in K} \CA_{I_k} \right) \\
 j(\beta)^* \Delta_W(\alpha)^! \CA_I         & j(\beta)^* \Delta_W(\alpha)^! \left(\boxtimes_{k \in K} \CA_{I_k} \right) \\
  & \\
 j(\beta)^* \CA_K                                      & j(\beta)^* \left(\boxtimes_{k \in K} \CA_{J_k} \right) \\};
 \path[->, font=\scriptsize]
(a-1-1) edge node[above]{$\Delta(\alpha)_\vert^! \tilde{c}_\gamma$} (a-1-2)
(a-2-1) edge node[left]{$j(\beta)^* \tilde{\nu}_{\alpha}$} (a-4-1)
(a-2-2) edge node[right]{$j(\beta)^* \left( \boxtimes_{k \in K} \tilde{\nu}_{\alpha_k}\right)$} (a-4-2)
(a-4-1) edge node[below]{$\tilde{c}_\beta$} (a-4-2);
\path[->, bij]
(a-1-1) edge node[inner sep=1pt]{$\sim$} (a-2-1)
(a-1-2) edge node[inner sep=1pt]{$\sim$} (a-2-2);
\end{tikzpicture}
\end{center}
(Here we use the decomposition of $I$ and $J$ into subsets indexed by $K$ to decompose $\alpha$ into $K$-many maps $\alpha_k : I_k \surj J_k$.) Upon inverting the $\tilde{c}$, the 2-morphism making the resulting diagram commute is the first compatibility morphism between the $\tilde{\nu}$ and the $\tilde{d}$. In particular, note that the open set over which it is defined is $U(\beta) \cap W(J)$, which is as large as possible.
\end{itemize}
Because of this procedure, we will often refer to a weak factorization algebra $(\CA, \tilde{c})$ in terms of its pieces $\CA_I$ over $W(I)$; we will write $\{\CA_I\}$, omitting the rest of the factorization algebra structure, when confusion will not result. 
\end{rmk}

Since $\lambda^* : \CD (\Ran{X}) \to \CD (W)$ is an exact monoidal functor, it induces a functor from the category of factorization algebras on $X$ to the category of weak factorization algebras on $X$,
\begin{align*}
\Weak: \FAlg{X} \to \WFAlg{X,W}.
\end{align*} 

Just as for factorization spaces, we have the following:

\begin{thm}\label{thm: weak factorization algebras}
This functor is an equivalence of categories.
\end{thm}

Morally, this follows via the same constructions that provide a quasi-inverse in the factorization space case. However, we must address the usual infinity-categorical subtleties. We will do this in a moment, but first let us explain a less hands-on proof using chiral Koszul duality and the chiral algebra perspective which is now available to us. We can consider the category of chiral Lie algebra objects in $\CD(W)$, as well as the full subcategory of those objects supported on $X \subset W$---we call such objects \emph{chiral algebras on $W$}, and denote the category by $\ChAlg{W}$. As will be explained in the proof of Proposition \ref{prop: comparing chiral and factorization pullback}, the chiral Koszul duality equivalence of \cite{FG} extends to the categories of $\CD$-modules on $W$, and we obtain a commutative diagram
\begin{center}
\begin{tikzpicture}[>=angle 90, bij/.style={above,sloped, inner sep=0.5pt}]
\matrix(b)[matrix of math nodes, row sep=2em, column sep=2em, text height=1.5ex, text depth=0.25ex]
{  \ChAlg{X} &\FAlg{X} \\
   \ChAlg{W} &\WFAlg{X,W}. \\};
\path[->, bij]
 (b-1-1) edge node[inner sep=1pt]{$\sim$} (b-1-2)
 (b-2-1) edge node[inner sep=1pt]{$\sim$} (b-2-2);
 \path[->, font=\scriptsize]
 (b-1-2) edge node[right]{$\Weak$} (b-2-2)
 (b-1-1) edge node[left]{$\lambda^*$} (b-2-1);
\end{tikzpicture}
\end{center}
But it is easy to see that the restriction functor $\lambda^*: \ChAlg{X} \to \ChAlg{W}$ is an equivalence with quasi-inverse given by pushforward; it follows that $\Weak$ is also an equivalence. 

Now, as promised, let us see how the gluing construction gives an explicit quasi-inverse to $\Weak$. Given for example an object $\CA$ of $\WFAlg{X,W}$, we proceed inductively on $\vert I \vert$ (and use implicitly the fact that for each $n$ we need only work with one set $I$ of size $n$), to construct the sheaf $\Glue(\CA)_I$ on all of $X^I$, as well as all Ran condition and factorization condition isomorphisms $\nu_\alpha$, $d_\alpha$ for all partitions $\alpha$ of $I$ (again, there are only finitely many such partitions), and higher compatibility isomorphisms, for subpartitions $I \surj J \surj K$. At each level of the compatibility structures, we begin with partitions of minimal degeneracy (i.e. $\vert J \vert = \vert I \vert - 1$) and proceed to the maximally degenerate case $I \surj \pt$. In particular, a given surjection from $I$ can be factored into a composition of at most $n$ maps, and so the compatibility conditions need to be defined and checked only up to level $n$. Note that the resulting object $\Glue(\CA)$ has the property that for any factorization algebra $\CB$ on $X$, any morphism on $W$ from the restriction of $\CB$ to $\CA$ extends uniquely to a morphism from $\CB$ to $\Glue(\CA)$. Likewise, for any $n \in \BBN$ and any $n$-morphism in $\WFAlg{X,W}$, one can again explicitly construct the image of this morphism under $\Glue$ using the same gluing procedure.

We claim that this gives us the quasi-inverse to $\Weak$. However, the above procedure only defines the action of the ``functor'' on an individual object or $n$-morphism. To show that this structure fits together to produce an honest (infinity) functor, we argue as follows. We consider the functor
\begin{align*}
\Lambda:  \text{Co-Com}^\ch(\CD(\Ran{X})) \to \text{Co-Com}^\ch(\CD(W))
\end{align*}
induced by restriction from $\Ran{X}$ to $\Ran{W}$. Although the categories of (weak) factorization algebras are not presentable $\infty$-categories, the categories of cocommutative coalgebra objects are, as explained in Remark 2.4.9 of \cite{FG}. Since $\Lambda$ preserves colimits, we can apply the Adjoint Functor Theorem (Corollary 5.5.2.9 of \cite{L-HTT}), and conclude that $\Lambda$ has a right adjoint, which we will call $R$. We will show that the restriction of $R$ to the full-subcategory of weak factorization algebras on $W$ lands in the category of factorization algebras on $X$, by showing that it agrees on objects with the definition of $\Glue$ given above. This completes the definition of $\Glue$ as a functor; it will then remain to check that this functor is indeed a quasi-inverse to $\Weak$ as claimed. 

First observe that for a given object $\CA$ of $ \text{Co-Com}^\ch(\CD(W))$, we can calculate the pieces $R\CA_{X^I}$ of $R\CA$ by hand, inductively: $R\CA_X$ is just $\CA_X$, and $R\CA_{X^I}$ is the pullback of $\lambda_{I,*}\CA_I$ and $\oplus_{\alpha: I \surj \{1,2\}} j(\alpha)_* j(\alpha)^* (R\CA_{X^{I_1}} \boxtimes R\CA_{X^{I_2}})$ over the direct sum of the corresponding terms involving $\CA_{I_1}, \CA_{I_2}$. In particular, it is straightforward to check that the composition $\Lambda \circ R$ is equivalent to the identity. 

Next, we claim that for any object $\CA$ of $\WFAlg{X,W}$, $R\CA$ agrees with $\Glue(\CA)$, which is by construction a factorization algebra. To prove this, we use the properties of the adjuntion $(\Lambda, R)$, and the fact that restricting back to $W$ after applying either $R$ or $\Glue$ yields the original object or morphism. More precisely, given an object $\CA$ of $\WFAlg{X,W}$, the identity map $\CA \to \CA$ corresponds under the adjunction to a morphism $\Phi: \Glue(\CA) \to R\CA$. Furthermore, the identity map $R\CA_{|W} \to \CA$ also results in a morphism $\Psi: R\CA \to \Glue(\CA)$, by construction of $\Glue(\CA)$ as described above. Now we look at the compositions $\Phi \circ \Psi: \Glue(\CA) \to \Glue(\CA)$ and $\Psi \circ \Phi: R\CA \to R\CA$. In both cases, the fact that they restrict to the identity over $W$ is enough (by construction of $\Glue$ in the first case, and by adjunction in the second) to show that each is equivalent to the identity, and hence $\Phi$ and $\Psi$ are quasi-inverse isomorphisms.

Thus we can consider $\Glue$ as a functor. The fact that $\Lambda \circ R$ is equivalent to the identity functor on $ \text{Co-Com}^\ch(\CD(W))$ implies that $\Weak \circ \Glue$ is equivalent to the identity functor on $\WFAlg{X,W}$. On the other hand, we have a natural morphism $\text{Id}_{\FAlg{X}} \to \Glue \circ \Weak$ arising from the adjunction $(\Lambda, R)$. It is easy to see that this is also an equivalence. We conclude that $\Glue$ and $\Weak$ are quasi-inverse isomorphisms, and thus that $\FAlg{X}$ is equivalent to $\WFAlg{X,W}$ for any suitable $W$, as claimed.  \qed

\section{\'Etale pullback of factorization spaces}
\label{sec: pullback of factorization spaces}
Let us assume that we have an \'etale morphism $\phi: X \to Y$ of smooth varieties, and let $\CZ= (\CZ_I, W(I), \ldots)$ be a weak factorization space on $Y$. In this section, our goal is to define a weak factorization space $\CZ^\prime = \phi^*\CZ$ over $X$. 

In the case $I = \{\pt\}$, we have $W({\{\pt\}}) = Y$. We set $W^\prime(\{\pt \}) = X$, and we define $\CZ^\prime_{\{\pt\}} = X \times_{Y} \CZ_{\{\pt\}}$. 

For more general $I$, recall from example \ref{eg: suitable sets} that the sets
\begin{align*}
V_\phi(I) \defeq \left\{ x^I \in X^I \ \vert \ \phi(x^{i_1}) = \phi(x^{i_2}) \Leftrightarrow x^{i_1} = x^{i_2} \text{ for } i_1,i_2 \in I \right\}.
\end{align*}
define a suitable family, and likewise so do the sets $W^\prime(I) \defeq \phi^* W(I)\subset X^I$ given by the intersection
\begin{align*}
(\phi^I)^{-1}(W(I)) \cap V_\phi(I) = V_\phi(I) \times_{Y^I} W(I).
\end{align*}
Now we let $\CZ_I^\prime$ be equal to the pullback
\begin{align*}
\CZ_I^\prime \defeq W(I)^\prime \times_{W(I)} \CZ_I.
\end{align*}
It is immediate that $\CZ^\prime_I$ is an indscheme with connection over $W^\prime(I)$. 

\begin{prop}\label{prop: etale pullback gives a weak factorization space}
There is a natural structure of weak factorization space on the data $\left\{\CZ_I^\prime, W^\prime(I)\right\}$, induced from the weak factorization structure on $\{\CZ_I, W(I)\}$. 
\end{prop}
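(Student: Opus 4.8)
The plan is to produce, for each surjection $\alpha: I \surj J$ and each partition-surjection giving $I = \bigsqcup_{j \in J} I_j$, the open subschemes $R'(I/J)$ and $F'(I;(I_j))$ together with the isomorphisms $\tilde\nu'_\alpha$ and $\tilde d'_\alpha$ for the candidate data $\{\CZ'_I, W'(I)\}$, and then to check the compatibilities. First I would set
\begin{align*}
R'(I/J) &\defeq (\phi^J)^{-1}(R(I/J)) \cap V^J_\phi, \\
F'(I;(I_j)) &\defeq (\phi^I)^{-1}(F(I;(I_j))) \cap V^I_\phi.
\end{align*}
Both are open (intersection of an open preimage with the open set $V^\bullet_\phi$) and both contain the small diagonal, since $R(I/J)$, $F(I;(I_j))$ and the relevant $V^\bullet_\phi$ all do. One must also check the containment conditions required by Definition~\ref{def: weak factorization space}(2),(3): that $R'(I/J) \subseteq W'(J) \cap \Delta(\alpha)^{-1}(W'(I))$ and $F'(I;(I_j)) \subseteq W'(I) \cap \prod_j W'(I_j)$. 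These reduce to the corresponding containments downstairs together with the behaviour of the sets $V^\bullet_\phi$ under the diagonal map $\Delta(\alpha): X^J \to X^I$ and the isomorphism $X^I \cong \prod_j X^{I_j}$ — the key point being that $\Delta(\alpha)^{-1}(V^I_\phi) \supseteq V^J_\phi$ (if two coordinates indexed by $J$ agree, so do all coordinates in the corresponding blocks, and conversely the $\phi$-collision of two $J$-coordinates forces a $\phi$-collision in $X^I$), and similarly $V^I_\phi \subseteq \prod_j V^{I_j}_\phi$.

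Next I would define the structure isomorphisms by pullback. Since everything in sight is a fibre product over the corresponding $Y$-objects, the isomorphism $\tilde\nu_\alpha$ over $R(I/J)$ pulls back along $\phi^J$ to an isomorphism of $\CZ'_I|_{R'(I/J)}$ with $\CZ'_J|_{R'(I/J)}$: concretely $\tilde\nu'_\alpha = \mathrm{id} \times_{\phi^J} \tilde\nu_\alpha$, using that $R'(I/J) \times_{Y^J} \CZ_J = \CZ'_J|_{R'(I/J)}$ and, via $\Delta(\alpha)$, that the pullback of $\Delta(\alpha)^*\CZ_I$ agrees with $\CZ'_I|_{R'(I/J)}$ (this last identification is exactly where the inclusion $\Delta(\alpha)^{-1}(V^I_\phi)\supseteq V^J_\phi$ and the compatibility of $V$'s with diagonals is used). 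Similarly, on $F'(I;(I_j)) \cap U(\alpha)$ — noting $U(\alpha)$ is defined by the same equations upstairs and downstairs, and that $(\phi^I)^{-1}(U(\alpha))\cap V^I_\phi = U(\alpha)\cap V^I_\phi$ since on $V^I_\phi$ a $\phi$-collision is an honest collision — I pull back $\tilde d_\alpha$ to get $\tilde d'_\alpha$, using $V^I_\phi \subseteq \prod_j V^{I_j}_\phi$ to identify the pullback of $\prod_j \CZ_{I_j}$ with $\prod_j \CZ'_{I_j}$ over the relevant open set. The formally integrable connections pull back because $\phi$ is étale (so $\phi^*$ of a $\CD$-module structure, equivalently a crystal, is canonically defined), and the connection on $\CZ'_I$ is the one induced on the fibre product.

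Finally, the compatibilities of Definition~\ref{def: weak factorization space}(4) — the associativity of the $\tilde\nu'$, the compatibility of $\tilde d'$ under composition (the analogue of Example~\ref{eg: compatibility of d}), and the mixed compatibility between $\tilde\nu'$ and $\tilde d'$ — all follow formally: each is an equality of two isomorphisms between restrictions of the $\CZ'_\bullet$, and since each $\CZ'_\bullet$ and each structure isomorphism is obtained by applying the pullback functor $(-) \times_{Y^\bullet} (\phi^\bullet)$ (a functor, hence respecting composition and identities) to the corresponding datum for $\CZ$, the desired equality is just the pullback of the equality that holds for $\CZ$, restricted to the (open, diagonal-containing) intersection of the relevant domains. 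The one genuine thing to verify here is that these intersections of pulled-back domains are themselves legitimate — i.e. still open and still contain the small diagonal — which is immediate, and that they are nonempty in the sense needed, which again follows from all sets containing $\Delta(X)$. The main obstacle, such as it is, is purely bookkeeping: keeping straight the several fibre-product identifications of the form $\Delta(\alpha)^*(\text{pullback}) \cong \text{pullback of }\Delta(\alpha)^*$ and $\prod_j(\text{pullback}) \cong \text{pullback of }\prod_j$, and checking in each case that the relevant open subset of $X^I$ on which one wants the identification actually lies inside the locus where the two $V$-sets agree. No step requires a new idea beyond the observation, already made in Section~\ref{sec: preliminary definition}, that on $V^\bullet_\phi$ the étale map $\phi$ behaves like a monomorphism as far as (non-)collisions of points are concerned.
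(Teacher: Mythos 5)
Your proposal is correct and follows essentially the same route as the paper: your open sets $R'(I/J)$ and $F'(I;(I_j))$ coincide with the paper's fibre-product definitions once those are unwound (precisely because of the containments $\Delta(\alpha)^{-1}(V^I_\phi)\supseteq V^J_\phi$ and $V^I_\phi\subseteq\prod_j V^{I_j}_\phi$ you verify), and you likewise obtain $\tilde\nu'_\alpha$, $\tilde d'_\alpha$ by pulling back $\tilde\nu_\alpha$, $\tilde d_\alpha$ and deduce the compatibilities formally from those downstairs. Your explicit checks of the $V^\bullet_\phi$-containments and of $(\phi^I)^{-1}(U(\alpha))\cap V^I_\phi=U(\alpha)\cap V^I_\phi$ are points the paper leaves implicit, and they are correct.
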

\begin{proof}
First let us check that the weak version of Ran's condition is satisfied. Let $\alpha: I \surj J$ be a surjection of finite sets. We need to find an open neighbourhood $R^\prime(\alpha)$ of the diagonal $\Delta(X)$ in $X^J$ over which we can identify the restriction of $\CZ^\prime_J$ and the pullback of $\CZ^\prime_I$. We will use the fact that we have such an identification $\tilde{\nu}_\alpha$ of $\CZ_J$ and the pullback of $\CZ_I$ over the open set $R(\alpha)$; thus we can define the desired isomorphism over the intersection of $W^\prime(J)$ with the preimage of $R(\alpha)$ under $\phi^J$ and the preimage of $W^\prime(I)$ under the embedding $\Delta(\alpha): X^J \emb X^I$. That is, we take
\begin{align*}
R^\prime(\alpha) \defeq \left(W^\prime(I) \times_{X^I} W^\prime(J) \right) \times_{Y^J} R(\alpha).
\end{align*}
Then we have
\begin{align*}
R^\prime(\alpha) \times_{W^\prime(J)} \CZ^\prime_J \simeq (W^\prime(I) \times_{X^{I}} W^\prime(J) ) \times _{Y^J} \left(R(\alpha) \times_{W(J)} \CZ_J \right),
\end{align*}
while
\begin{align*}
R^\prime(\alpha) \times_{W^\prime(I)} \CZ_I^\prime \simeq (W^\prime(I) \times_{X^{I}} W^\prime(J) ) \times _{Y^J} \left(R(\alpha) \times_{W(I)} \CZ_I \right).
\end{align*}

From this presentation, it is clear that we should define $\tilde{\nu}^\prime_\alpha$ to be 
\begin{align*}
\id_{W^\prime(I) \times_{X^I} W^\prime(J)} \times \tilde{\nu}_\alpha
\end{align*}
(composed with the natural isomorphisms above). 

Let us next define the factorization isomorphisms $\tilde{d}^\prime_\alpha$. We set
\begin{align*}
F^\prime(\alpha) &\defeq \left( W^\prime(I) \times_{X^I} \left( \prod_{j \in J} W^\prime(I_j) \right) \right) \times_{Y^I} F(\alpha) 
\\ &= W^\prime(I) \cap \left(\prod_{j \in J} W^\prime(I_j) \right) \cap  (\phi^I)^{-1} \left(F(\alpha) \right).
\end{align*}

Then we have that $F^\prime(\alpha) \times_{W^\prime(I)} \CZ^\prime_I$ is canonically isomorphic to
\begin{align*}
\left( W^\prime(I) \times_{X^I} \left( \prod_{j \in J} W^\prime(I_j) \right) \right) \times_{Y^I} \left( (F(\alpha) \times _{W(I)} \left( \prod_{j \in J} \CZ_{I_j} \right) \right),
\end{align*}
and similarly $F^\prime(\alpha) \times _{\prod_{j \in J} W^\prime(I_j)} \left(\prod_{j \in J} \CZ^\prime_{I_j} \right)$ can be identified canonically with
\begin{align*}
\left( W^\prime(I) \times_{X^I} \left( \prod_{j \in J} W^\prime(I_j) \right) \right) \times_{Y^I} \left( (F(\alpha) \times_{\prod_{j \in J} W(I_j)} \left( \prod_{j \in J} \CZ_{I_j} \right) \right). 
\end{align*}
Hence we can take $\tilde{d}^\prime_\alpha$ to be the isomorphism
\begin{align*}
\id_{ W^\prime(I) \times_{X^I} \left( \prod_{j \in J} W^\prime(I_j) \right)} \times \tilde{d}_\alpha.
\end{align*}
It is clear that $\tilde{d}^\prime_\alpha$ and $\tilde{\nu}^\prime_\alpha$ satisfy the required compatibilities, because $\tilde{d}_\alpha$ and $\tilde{\nu}_\alpha$ do. Therefore, they give the weak factorization structure on $\{\CZ^\prime, W^\prime(I)\}$, and the proof is complete. 
\end{proof}

Combining Proposition \ref{prop: etale pullback gives a weak factorization space} and Theorem \ref{thm: weak factorization spaces are factorization spaces} we can understand how to pull back a factorization space along an \'etale morphism $\phi$. We define a weak factorization space in the na\"ive way over the open sets $V^I_{\phi} \subset X^I$; then we use the functor $\Glue$ to extend the components of the weak factorization space to all of $X^I$. 

Similarly, we can define the pullback of a factorization algebra along an \'etale morphism. In the language of categories of $\CD$-modules on the Ran space and its suitable subspaces $W$, we can express this result as follows. Suppose that $\{W(I) \subset Y^I \}$ is a suitable family over $Y$, and $\{W^\prime(I) = \phi^*W(I)\subset X^I \}$ is the suitable family over $X$ defined using an \'etale morphism $\phi: X \to Y$ as in example \ref{eg: suitable sets}. Let $W$ and $W^\prime$ be the corresponding pseudo-indschemes, with $\overline{\phi}$ the obvious morphism $W^\prime \to W$. 

Then pullback along $\overline{\phi}$ is monoidal with respect to the chiral monoidal structures on $\CD(W)$ and $\CD(W^\prime)$, and hence takes coalgebra objects to coalgebra objects. Furthermore, it takes weak factorization algebras to weak factorization algebras. It is straightforward to check that this pullback functor agrees with the pullback defined more explicitly for each piece over $W(I)$ as in the case of weak factorization spaces above. 

Note that the pullback morphism from $\CD(\Ran{Y})$ to $\CD(\Ran{X})$ fails to be monoidal in general, for exactly the same reason that the na\"ive pullback of a factorization space or factorization algebra fails to satisfy factorization. This is why it is necessary to pass through the category of weak factorization algebras. 

\section{Examples of universal factorization spaces}
\label{sec: examples}
Our goal is now to give a precise definition of a universal factorization space in some dimension $d$. Roughly, it should be an assignment of a factorization space to each smooth $d$-dimensional variety, in a way behaving well with respect to \'etale morphisms between varieties, but also behaving well in families. In order to carefully formulate this condition, we need to define the notion of a family of factorization spaces. 

\begin{defn}
Let $S$ be a scheme of finite type over $k$, and let $\pi: X \to S$ be a smooth morphism of dimension $d$. Let $(X/S)^I$ denote the $I$-fold fibre product 
\begin{align*}
X \times_S X \times_S \ldots \times_S X \simeq X^I \times_{S^I} S.
\end{align*}
A \emph{relative factorization space over $X/S$} consists of the following data:

\begin{enumerate}
\item For each finite set $I$, we have a prestack $\CY_{(X/S)^I} \in \operatorname{PreStk}_{/S}$, representable by an indscheme, and equipped with a map
\begin{align*}
f_I: \CY_{(X/S)^I} \to (X/S)^I 
\end{align*}
and a formally integrable relative connection over $(X/S)^I/S$. 
\item For each surjection $\alpha: I \surj J$, we require an isomorphism $\nu_{\alpha}$ fulfilling Ran's condition over the diagonal $(X/S)^J \emb (X/S)^I$. 
\item For each surjection $\alpha: I \surj J$, we also require an isomorphism $d_{\alpha}$ fulfilling the factorization condition over $U(\alpha) \times_{X^I} (X/S)^I$.
\end{enumerate}
We require that these isomorphisms be compatible with each other and with composition. 
\end{defn}

\begin{rmk}
\begin{enumerate}
\item Note that this is strictly weaker than a factorization space over the total space $X$: not only are the spaces only required to be defined and to be equipped with the appropriate isomorphisms over smaller spaces $(X/S)^I \subset X^I$, the connection is only required to be defined along the fibres of $X$ over $S$. 
\item Note also that this is \emph{not} an example of a weak factorization space over $X$, because $(X/S)^I$ need not contain an open neighbourhood of the diagonal $\Delta(X)$. 
\item On the other hand, given a factorization space on the total space $X$, restriction of each piece to the appropriate $(X/S)^I$ does give a relative factorization space. 
\end{enumerate}
\end{rmk}

We can now formulate the notion of a universal factorization space; it is modelled on the definition of a universal $\CD$-module as in \cite{BD1}, 2.9.9:

\begin{defn}
\label{defn: universal factorization space}
Let $d$ be a positive integer. A \emph{universal factorization space of dimension $d$} consists of the following data:
\begin{enumerate}
\item For each smooth family $\pi: X \to S$ of relative dimension $d$, we require a relative factorization space
\begin{align*}
\CY_{\Ran(X/S)} = \left\{ \CY_{(X/S)^I} \to (X/S)^I \right\}
\end{align*}
over $X/S$.
\item For each fibrewise \'etale morphism of smooth $d$-dimensional families $\phi: X/S \to X^\prime/S^\prime$ we require an isomorphism
\begin{align*}
\CY(\phi) : \CY_{\Ran(X/S)} \EquivTo \phi^* \CY_{\Ran(X^\prime/S^\prime)}
\end{align*}
of relative factorization spaces. These isomorphisms are required to be compatible with composition of fibrewise \'etale morphisms. 
\end{enumerate}
\end{defn}

Let us now discuss some important examples of factorization spaces from the literature, and check that they are indeed compatible with respect to pullback along \'etale morphisms between smooth curves. (It is also possible to define relative versions of these factorization spaces, over families of smooth curves, but since the focus of this paper is on the pullback, we will only discuss this property.) Our first two examples will live over curves, and will be the Beilinson--Drinfeld Grassmannian and the space of meromorphic jets of \cite{KV}. 

As a third example, we will sketch the construction of a factorization space over any smooth variety $Y$ using the Hilbert scheme of points of $Y$. The details of this construction are contained in the author's thesis, and will appear in a future paper; we include the sketch here only to illustrate that universal factorization spaces do exist over higher-dimensional varieties. 

The critical observation in all three examples is the following: let $\phi: Y \to Y^\prime$ be an \'etale morphism; let $S$ be an arbitrary scheme, and consider a morphism $y^I: S \to Y^I$ whose image lies in the open subscheme $V_\phi(I)$ of $Y^I$ defined as in (\ref{eq: suitable sets for pullbacks}); and let $\{y^I\}$ denote the union of the graphs of the functions $y^i: S \to Y$. Then the \'etale morphism $\phi$ induces an isomorphism between the formal schemes corresponding to the formal completions of the graphs in $S \times Y$:
\begin{align} \label{eq: isomorphism of formal neighbourhoods}
(S \times Y)^{\wedge}_{\graph{y^I}} \EquivTo (S \times Y^\prime)^{\wedge}_{\{\phi^I \circ y^I \}}.
\end{align}

\begin{eg}\label{eg: grassmannian}
Let $G$ be a reductive algebraic group, let $C$ be a smooth curve, and recall the \emph{Beilinson--Drinfeld Grassmannian}, introduced in \cite{BD2} and defined by the family
\begin{align*}
\GrGXI{C}: S \mapsto \GrGXI{C}(S) \defeq \left\{ (c^I, \CP, \sigma) \ \left| \begin{array}{l}
c^I: S \to C^I; \\
\CP \to S \times C \text{ a principal $G$-bundle; }\\
\sigma: S \times C \setminus \{c^I\} \to \CP \text{ a section}
\end{array}
\right. \right\}.
\end{align*}
\end{eg}

\begin{prop}
The Beilinson--Drinfeld Grassmannian is universal with respect to pullback along \'etale morphisms between the smooth curves $\emph{C}$. 
\end{prop}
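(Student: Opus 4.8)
The plan is to supply, for every \'etale morphism $\phi\colon C\to C'$ of smooth curves, the isomorphism $\CY(\phi)\colon \GrGXI{C}\EquivTo\phi^{*}\GrGXI{C'}$ demanded by Definition~\ref{defn: universal factorization space}, and then to check its compatibility with composition; that $C\mapsto\GrGXI{C}$ is itself a factorization space for each fixed $C$ is established in \cite{BD2}. By the description of the \'etale pullback in Section~\ref{sec: pullback of factorization spaces}, $\phi^{*}\GrGXI{C'}$ is $\Glue$ applied to the weak factorization space whose $I$-th piece is $\CZ'_{I}\defeq V^{I}_{\phi}\times_{(C')^{I}}\GrGXI{C'}$ over $W'(I)=V^{I}_{\phi}$ (here $W(I)=(C')^{I}$ since $\GrGXI{C'}$ is an honest factorization space). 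By Theorem~\ref{thm: weak factorization spaces are factorization spaces}, $\Weak$ is a fully faithful equivalence and $\Weak\circ\Glue$ is the identity, so to produce $\CY(\phi)$ it suffices to produce an isomorphism of weak factorization spaces between $\Weak(\GrGXI{C})$ and $\{\CZ'_{I},W'(I)\}=\Weak(\phi^{*}\GrGXI{C'})$; concretely, an isomorphism, for each $I$, between the restriction of $\GrGXI{C}$ to $V^{I}_{\phi}$ and $\CZ'_{I}$, compatible with the weak Ran and factorization isomorphisms.

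To produce this isomorphism I would use the Beauville--Laszlo (formal gluing) description of the Beilinson--Drinfeld Grassmannian. Fix $c^{I}\colon S\to V^{I}_{\phi}\subset C^{I}$ and write $D=\graph{c^{I}}$, $D'=\graph{\phi^{I}\circ c^{I}}$ for the unions of graphs. Beauville--Laszlo applied to the divisor $D\subset S\times C$ identifies the datum of a $G$-bundle on $S\times C$ trivialized on $S\times C\setminus D$ with the datum of a $G$-bundle on the formal neighbourhood $(S\times C)^{\wedge}_{D}$ together with a trivialization on the punctured formal neighbourhood $(S\times C)^{\wedge}_{D}\setminus D$, and likewise for $C'$. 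The essential input is equation~\eqref{eq: isomorphism of formal neighbourhoods}: because $c^{I}$ lands in $V^{I}_{\phi}$ and $\phi$ is \'etale, $\id_{S}\times\phi$ carries $D$ isomorphically onto $D'$ and induces an isomorphism of formal schemes $(S\times C)^{\wedge}_{D}\EquivTo(S\times C')^{\wedge}_{D'}$ respecting the divisors, hence also an isomorphism of the punctured formal neighbourhoods. Transporting the formal gluing data along this isomorphism is manifestly an equivalence of functors of points, and gives the required isomorphism of the restriction of $\GrGXI{C}$ to $V^{I}_{\phi}$ with $\CZ'_{I}$.

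It remains to check that these isomorphisms intertwine the weak Ran and factorization structures, and that they are compatible with composition. In the formal gluing picture the factorization isomorphism $d_{\alpha}$ of $\GrGXI{C}$ is induced, over $U(\alpha)$, by the decomposition $\graph{c^{I}}=\bigsqcup_{j\in J}\graph{c^{I_{j}}}$ of the divisor into disjoint pieces, under which formal gluing data factors as a product; Ran's isomorphism $\nu_{\alpha}$ is the tautological identification coming from the equality $\graph{c^{I}}=\graph{c^{J}}$ of subschemes of $S\times C$ when $c^{I}=\Delta(\alpha)\circ c^{J}$. Over $V^{I}_{\phi}$ the morphism $\id_{S}\times\phi$ preserves both decompositions (if $c^{I}\in V^{I}_{\phi}\cap U(\alpha)$ then $\phi^{I}\circ c^{I}\in U(\alpha)$), so the naturality of \eqref{eq: isomorphism of formal neighbourhoods} --- with respect to passing to a subunion of graphs and to repeating points --- forces the isomorphisms of the previous paragraph to commute with $\tilde{\nu}$, $\tilde{d}$ on both sides. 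This yields the morphism of weak factorization spaces, hence by Theorem~\ref{thm: weak factorization spaces are factorization spaces} the isomorphism $\CY(\phi)$ of factorization spaces. For a composable pair $C\xrightarrow{\phi}C'\xrightarrow{\psi}C''$, the identity $\CY(\psi\circ\phi)=\phi^{*}\CY(\psi)\circ\CY(\phi)$ reduces to the cocycle identity for the formal-neighbourhood isomorphisms \eqref{eq: isomorphism of formal neighbourhoods}, which holds by functoriality of formal completion; the relative version over an arbitrary base $S$ is identical, since \eqref{eq: isomorphism of formal neighbourhoods} is already stated for an arbitrary test scheme.

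The step I expect to require the most care is making the Beauville--Laszlo description of $\GrGXI{C}$ precise enough in the family over $V^{I}_{\phi}$ that (a) the transport of data along \eqref{eq: isomorphism of formal neighbourhoods} is visibly an isomorphism of the functors of points, uniformly in $S$, and (b) the Beilinson--Drinfeld structure maps $\nu_{\alpha}$, $d_{\alpha}$ are literally the ``restrict/split the divisor'' operations, so that naturality of \eqref{eq: isomorphism of formal neighbourhoods} delivers all the compatibilities of Definition~\ref{def: weak factorization space}(4) with no further computation. One must also keep track of exactly why the identification exists only over $V^{I}_{\phi}$ and not over all of $C^{I}$: \'etaleness of $\phi$ gives an isomorphism of the formal neighbourhoods of the graphs precisely when the pattern of collisions among the $c^{i}$ is preserved by $\phi$, which is the defining condition of $V^{I}_{\phi}$, and it is the functor $\Glue$ of Theorem~\ref{thm: weak factorization spaces are factorization spaces} that then extends the pullback to all of $X^{I}$.
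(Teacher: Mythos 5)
Your proposal follows essentially the same route as the paper: reduce to an isomorphism of the $I$-th components over $V^I_\phi$, identify $S$-points on both sides via formal gluing (Beauville--Laszlo) of $G$-bundles along the graphs, transport the data through the isomorphism \eqref{eq: isomorphism of formal neighbourhoods} of formal neighbourhoods, and invoke Theorem \ref{thm: weak factorization spaces are factorization spaces} to pass from an isomorphism of weak factorization spaces to one of factorization spaces. Your write-up is correct and in fact spells out the compatibility with $\tilde{\nu}$, $\tilde{d}$ and with composition in more detail than the paper, which leaves those checks as ``straightforward.''
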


\begin{proof}
Let $\phi: C \to D$ be an \'etale morphism of smooth curves. We wish to show that the pullback of the Beilinson--Drinfeld Grassmannian over $D$ along $\phi$ is isomorphic to the Beilinson--Drinfeld Grassmannian over $C$; in order to do this it is sufficient to show that for each finite set $I$ we have isomorphisms of the $I$th components over the open subscheme $V_\phi(I) \subset C^I$. By definition, the $I$th component of the pullback on $V_\phi(I)$ is 
\begin{align*}
V_\phi(I) \times_{D^I} \GrGXI{D}.
\end{align*}
An $S$-point of this space is given by the data $(c^I, d^I, \CP, \sigma)$, where $c^I: S \to V_\phi(I) \subset C^I$, $d^I = \phi^I \circ c^I$, $\CP$ is a principal $G$-bundle on $S \times D$, and $\sigma$ is a trivialization of $\CP$ away from $\{ d^I \}$. 

Recall that the data of $\CP$ and $\sigma$ is equivalent to the data of a principal $G$-bundle on the formal neighbourhood of the graph $\{d^I\}$ together with a trivialization of this bundle on the punctured formal neighbourhood. But since the formal neighbourhood of $\{d^I\}$ in $S \times D$ is isomorphic to the formal neighbourhood of $\{c^I\}$ in $ S \times C$ as in (\ref{eq: isomorphism of formal neighbourhoods}), this data is equivalent to the data of a principal $G$-bundle $\CP^\prime$ on $S \times C$ together with a trivialization $\sigma^\prime$ away from the graph $\{c^I\}$. That is, we have a canonical isomorphism
\begin{align*}
\left(V_\phi(I) \times_{D^I} \GrGXI{D} \right)(S) \simeq \left( V_\phi(I) \times_{C^I} \GrGXI{C} \right)(S).
\end{align*}
It is straightforward to see that these isomorphisms are functorial in $S$, and hence induce an isomorphism
\begin{align*}
V_\phi(I) \times_{D^I} \GrGXI{D} \to V_\phi(I) \times_{C^I} \GrGXI{C}
\end{align*}
for each finite set $I$. Moreover, as we allow $I$ to vary, the resulting isomorphisms are compatible with the factorization structures, and hence provide an isomorphism of weak factorization spaces. Finally, we conclude by Theorem \ref{thm: weak factorization spaces are factorization spaces} that we have an isomorphism of factorization spaces as desired.
\end{proof}

\begin{eg} \label{eg: meromorphic jets KV}
Let us now study the factorization space of meromorphic jets, defined in (3.3.2) of \cite{KV}. Fix $X=\Spec(A)$ an affine scheme, $C$ a smooth curve, and $I$ a finite set. We are interested in the functor
\begin{align*}
\CL(X)_{C^I}: S \to \CL(X)_{C^I}(S) \defeq \{ (c^I, \rho) \},
\end{align*}
where $c^I: S \to C^I$ is a morphism, and $\rho$ is a meromorphic function on the formal neighbourhood of the graph $\{c^I\}$ in $S \times C$. More formally, $\rho$ is a morphism of $k$-algebras
\begin{align*}
A \to \CK_{c^I},
\end{align*}
where $\CK_{c^I}$ is the ring of functions on the punctured formal neighbourhood of $\{c^I\}$. 

Then it is clear that the isomorphism (\ref{eq: isomorphism of formal neighbourhoods}) induces an isomorphism
\begin{align*}
V_\phi(I) \times_{D^I} \CL(X)_{D^I} \EquivTo V_\phi(I) \times_{C^I} \CL(X)_{C^I},
\end{align*}
giving rise to an isomorphism of the factorization space $\CL(X)_{\Ran{C}}$ with the pullback of the factorization space $\CL(X)_{\Ran{D}}$. 
\end{eg}

\begin{eg} \label{eg: Hilbert scheme}
We now sketch the construction of a new factorization space $Y$ over any smooth variety $Y$. To our knowledge, this is the first construction of a non-trivial factorization space over a variety of dimension greater than one. A very similar space was constructed independently (but never written down) by Masoud Kamgarpour and Anthony Licata. 

Let $\Hilb^n(Y)$ denote the Hilbert scheme of $n$ points in $Y$, and let $\Hilb(Y)$ denote the disjoint union over all Hilbert schemes for $n\ge 0$. Our goal is to use the Hilbert scheme of $Y$ to build a factorization space $\{ \HilbI{Y}{I} \}$ over $Y$. We begin by defining the functor of points of each space $\HilbI{Y}{I}$ for $I \in \fSet$ a finite set. 

\begin{defn}\label{def: Hilbert factorization space}
Given $I \in \fSet$, let $\HilbI{Y}{I}$ be the prestack sending a test scheme $S$ to the set
\begin{align*}
\left\{ (\xi,y^I) \ \left|
\begin{array}{l}
y^I: S \to Y^I; \\
\xi \in \Hilb(Y)(S);\\
\Supp(\xi) \subset \left\{y^I\right\} \text{ set-theoretically}
\end{array}
\right.
\right\}.
\end{align*}
\end{defn}

\begin{rmk} \label{rmk: set-theoretic condition}
Here $\left\{y^I\right\} \subset S \times Y$ is the union over $I$ of the graphs of each of the functions $y^i: S \to Y$. The condition on the support of $\xi \in \Hilb^n(Y)(S)$ can be interpreted as follows: recall that $\xi$ gives rise to $n$ unordered morphisms $\xi_j: S \to Y$; then for each $j$ the graph of $\xi_j$ must be equal set-theoretically to the graph of some $y^i: S \to Y$. Equivalently, we must have $\xi_j \circ \redEmb{S} = x^i \circ \redEmb{S} : S_\red \to Y$, where $\redEmb{S}: S_\red \emb S$ is the canonical inclusion of the reduced scheme $S_\red$ in $S$.
\end{rmk}
\end{eg}

We omit the proof that these functors are representable by indschemes: the strategy is to show that each  $\HilbI{Y}{I}$ is representable by an ind-closed sub-indscheme of $Y^I \times \Hilb(Y)$, equipped with an integrable connection over $Y^I$. The proof is technical but not deep. It is straightforward to verify that these spaces satisfy Ran's condition and the factorization condition, and hence give rise to a factorization space over $Y^I$. The isomorphism (\ref{eq: isomorphism of formal neighbourhoods}) again allows us to show that the spaces are compatible under \'etale pullback.

The detailed proofs of these facts are beyond the scope of this paper, and will be included in a separate paper. 

\section{Pullback of factorization and chiral algebras}
\label{sec: pullback of factorization and chiral algebras}
Let $\phi: X \to Y$ be an \'etale morphism. We have defined a functor 
\begin{align*}
\phi^*: \FAlg{Y} \to \FAlg{X}.
\end{align*}
In this section, we check that it is compatible with the pullback functor
\begin{align*}
\phi^*_{\ch}: \ChAlg{Y} \to \ChAlg{X}
\end{align*}
under Koszul duality.

Let us begin by recalling the definition of the chiral \'etale pullback functor $\phi^*_{\ch}$. We will do this in some detail, since it is not written elsewhere in the literature. Let $(\CB_Y, \mu_Y)$ be a chiral algebra on $Y$: $\CB_Y$ is a $\CD$-module on $Y$, and $\mu_Y$ is a Lie bracket on the pushforward $\CB_{\Ran{Y}}$ of $\CB_Y$ to the Ran space:
\begin{align*}
\mu_Y: \CB_{\Ran{Y}} \otimes^\ch \CB_{\Ran{Y}} \to \CB_{\Ran{Y}}.
\end{align*}

For our purposes we will concentrate on the restriction of this map to $Y^2$; for repeated copies of $Y$ the argument is similar. Let us denote this restricted map also by $\mu_Y$:
\begin{align*}
\mu_Y: (j_Y)_*j_Y^*(\CB_Y \boxtimes \CB_Y) \to (\Delta_Y)_! \CB_Y.
\end{align*}

(Here and throughout this section, $\Delta_Y$ and $j_Y$ are the inclusions of the diagonal and its complement, which will be denoted $U_Y$, in $Y^2$.) The pullback of the chiral algebra $(\CB_Y, \mu_Y)$ has as underlying $\CD_X$-module simply the module $\CB_X \defeq \phi^! (\CB_Y)$. The chiral bracket $\mu_X$ is defined on $X^2$ in the following way. Recall the open subscheme $V_\phi(\{1,2\})$ defined in \ref{eg: suitable sets}; we will denote it by $V_\phi(2)$, and will let $j_V$ denote its open embedding in $X^2$. Since $V_\phi(2)$ and $U_X$ given an open cover of $X^2$, it is sufficient to define the map $\mu_X$ on each of these pieces in a compatible way. 

Moreover, notice that the restriction of $(\Delta_X)_! (\CB_X)$ to $U_X$ is zero, and so the restriction of $\mu_X$ to $U_X$ is of course zero. It follows that it suffices to define the map $\mu_X$ on $V_\phi(2)$. 

Next notice that $j_V^* (j_X)_* (j_X)^* (\CB_X \boxtimes \CB_X)$ is canonically isomorphic to 
\begin{align*}
j_V^* (\phi^2)^! (j_Y)_* j_Y^* (\CB_Y \boxtimes \CB_Y).
\end{align*}
Indeed, this can be seen from the following diagram of distinguished triangles of sheaves on $V_\phi(2)$:

\begin{center}
\begin{tikzpicture}[>=angle 90]
\matrix(a)[matrix of math nodes, row sep=3em, column sep=1em, text height=1.5ex, text depth=0.25ex]
{ \to j_V^* (\CB_X \boxtimes \CB_X) & j_V^* (j_X)_* j_X^* (\CB_X \boxtimes \CB_X) & j_V^* (\Delta_X)_! (\Delta_X)^!(\CB_X \boxtimes \CB_X) \to\\
  \to j_V^* (\phi^2)^! (\CB_Y \boxtimes \CB_Y) & j_V^* (\phi^2)^! (j_Y)_* j_Y^* (\CB_Y \boxtimes \CB_Y) & j_V^* (\phi^2)^! (\Delta_Y)_! (\Delta_Y)^! (\CB_Y \boxtimes \CB_Y) \to \\};
  
\path[->, font=\scriptsize]
 (a-1-1) edge (a-1-2) 
 (a-1-2) edge (a-1-3)
 (a-2-1) edge (a-2-2)
 (a-2-2) edge (a-2-3)
 (a-1-1) edge (a-2-1)
 (a-1-3) edge (a-2-3)
 (a-1-2) edge[dashed] (a-2-2); 
\end{tikzpicture}
\end{center}
(Here the left vertical isomorphism follows from the definition of $\CB_X$ and the compatibility of the exterior tensor product with pullback, and the right vertical map is a base change isomorphism.)

Furthermore, base change also gives an isomorphism
\begin{align*}
j_V^* (\phi^2)^! (\Delta_Y)_! \CB_Y \EquivTo j_V^* (\Delta_X)_! \CB_X.
\end{align*}

From this we see that we can define $j_V^*(\mu_X)$ to be the pullback of $\mu_Y$:
\begin{align*}
j_V^* (\phi^2)! (j_Y)_* j_Y^* (\CB_Y \boxtimes \CB_Y) \xrightarrow{j_V^* (\phi^2)^* (\mu_Y)} j_V^* (\phi^2)^! (\Delta_Y)_! \CB_Y.
\end{align*}

Similarly, the higher compatibility morphisms need only be defined over the sets $V_\phi(I)$, where they can be defined as the pullback of the higher compatibility morphisms for $\mu_Y$. 

We can rephrase this in terms of the Ran space description of chiral algebras. The suitable family $\{V_\phi(I)\}$ defines $V_\phi \subset \Ran X$; the functor given by pulling back along the composition
\begin{align*}
V_\phi \emb \Ran X \to \Ran Y
\end{align*}
respects the chiral monoidal structures on $\CD(\Ran Y)$ and $\CD(V_\phi)$, and hence takes Lie algebra objects to Lie algebra objects. Furthermore, the condition that the $\CD_{\Ran Y}$-module $\CB$ underlying a chiral algebra be the pushforward of a $\CD_Y$-module $\CB_Y$ implies that the corresponding $\CD_{V_\phi}$-module is the pushforward of $\phi^*\CB_Y$ along the canonical map $X = V_\phi(\{*\}) \to V_\phi$. It is straightforward to check that a Lie algebra object in $\CD(V_\phi)$ with this property is equivalent to a chiral algebra on $X$: one extends the underlying $\CD$-module by pushing forward along $X \to \Ran X$, and all of the structure morphisms are easily seen to be trivial away from the diagonals, and hence away from $V_\phi$. Restriction from $\Ran X$ to $V_\phi$ gives the quasi-inverse. 

\begin{prop}
\label{prop: comparing chiral and factorization pullback}
Let $\{\CA_{Y^I}\}$ be a factorization algebra over $Y$, and let $(\CB_Y, \mu_Y)$ denote the corresponding chiral algebra on $Y$. Suppose that $\phi: X \to Y$ is an \'etale morphism, and let $\{\CA_{X^I}\}$ denote the pullback of the factorization algebra to $X$. Then the chiral algebra $(\CB^\prime_X, \mu_X^\prime)$ associated to this factorization algebra is canonically isomorphic to the chiral algebra $\CB_X \defeq \phi^*_\ch  \CB_Y$.
\end{prop}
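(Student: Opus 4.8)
The plan is to unwind the three constructions in play---the Beilinson--Drinfeld--Francis--Gaitsgory equivalence that attaches a chiral algebra to a factorization algebra, the \'etale pullback $\phi^*$ of factorization algebras built in Section~\ref{sec: pullback of factorization spaces}, and the chiral pullback $\phi^*_\ch$ recalled above---and to check that they fit together; as in the body of the section it suffices to work on $X^2$. On underlying $\CD$-modules there is nothing to do: the chiral algebra attached to $\{\CA_{X^I}\}$ has underlying $\CD$-module $\CA_X$, and the $\{\pt\}$-component of $\phi^*\{\CA_{Y^I}\}$ is $\phi^!\CA_Y$ by construction of the pullback, which is precisely the underlying $\CD$-module $\CB_X = \phi^!\CB_Y$ of $\phi^*_\ch\CB_Y$; this is our candidate identification. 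The content is the comparison of chiral brackets. Recall from \cite{BD1,FG} that the bracket $\mu_X^\prime$ attached to the factorization algebra $\{\CA_{X^I}\}$ is, on $X^2$, the composite
\begin{align*}
j_{X,*}j_X^*(\CA_X \boxtimes \CA_X) \xrightarrow{d_X} j_{X,*}j_X^*\CA_{X^2} \xrightarrow{\partial} \Delta_{X,!}\Delta_X^!\CA_{X^2}[1] \xrightarrow{\nu_X} \Delta_{X,!}\CA_X[1],
\end{align*}
where $d_X,\nu_X$ are the factorization and Ran isomorphisms and $\partial$ is the connecting map of the distinguished triangle $\Delta_{X,!}\Delta_X^!\CA_{X^2} \to \CA_{X^2} \to j_{X,*}j_X^*\CA_{X^2} \to \Delta_{X,!}\Delta_X^!\CA_{X^2}[1]$. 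In particular $\mu_X^\prime$ has target supported on $\Delta_X(X)$, so it is determined by its restriction to any open neighbourhood of the diagonal together with its (automatic) vanishing away from it.

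Next I would make explicit what $\phi^*\{\CA_{Y^I}\}$ looks like near the diagonal. Applying Proposition~\ref{prop: etale pullback gives a weak factorization space} to the maximal weak structure on the honest factorization algebra $\{\CA_{Y^I}\}$, and then the construction of $\Glue$ from the proof of Theorem~\ref{thm: weak factorization spaces are factorization spaces} (which for $I=\{1,2\}$ glues the local models along $\tilde d_\alpha$), one finds that over the open set $\overline{W}=V^2_\phi\subset X^2$ the component $\CA_{X^2}$ is canonically $(\phi^2)^!\CA_{Y^2}\vert_{\overline W}$, the factorization isomorphism $d_X$ restricted to $W(2)=\overline W\cap U_X$ is $(\phi^2)^!(d_Y)$, and Ran's isomorphism $\nu_X$---which is defined on all of $\Delta_X(X)\subset\overline W$---is $\phi^!(\nu_Y)$. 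Thus over $\overline W$ the composite defining $\mu_X^\prime$ is assembled out of $(\phi^2)^!$ of the corresponding pieces for $\CA_{Y^2}$.

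The key step, and the only one using \'etaleness, is that $(\phi^2)^!$ is compatible with this distinguished triangle near the diagonal. The square with vertices $X\xrightarrow{\Delta_X}X^2$, $Y\xrightarrow{\Delta_Y}Y^2$ and verticals $\phi,\phi^2$ commutes but is \emph{not} Cartesian in general---its base change along $\Delta_Y$ is $X\times_Y X=\Delta_X(X)\sqcup Z_\phi$---yet it \emph{becomes} Cartesian after restriction along $\overline W\hookrightarrow X^2$, since $(\phi^2)^{-1}(\Delta_Y(Y))\cap\overline W=\Delta_X(X)$, precisely because $\overline W=V^2_\phi$ is the locus where $\phi(x_1)=\phi(x_2)$ forces $x_1=x_2$ (the same observation as in Section~\ref{sec: preliminary definition} and as the isomorphism~\eqref{eq: isomorphism of formal neighbourhoods}). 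Proper and smooth base change for $\CD$-modules then carries the triangle for $(\Delta_Y,j_Y)$ to that for $(\Delta_X,j_X)$ over $\overline W$, intertwining $\partial_Y$ with $\partial_X$. Combined with the previous paragraph this gives $\mu_X^\prime\vert_{\overline W}=(\phi^2)^!(\mu_Y)\vert_{\overline W}$; but by the definition of $\phi^*_\ch$ recalled above---using exactly the same base-change isomorphisms $j_{\overline W}^*(j_X)_*j_X^*(\CB_X\boxtimes\CB_X)\cong j_{\overline W}^*(\phi^2)^!(j_Y)_*j_Y^*(\CB_Y\boxtimes\CB_Y)$ and $j_{\overline W}^*(\phi^2)^!(\Delta_Y)_!\CB_Y\cong j_{\overline W}^*(\Delta_X)_!\CB_X$---we also have $\mu_X\vert_{\overline W}=(\phi^2)^!(\mu_Y)\vert_{\overline W}$. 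Hence $\mu_X^\prime$ and $\mu_X$ agree on $\overline W$, and since both vanish on $X^2\setminus\Delta_X(X)$ and $\overline W$ is an open neighbourhood of $\Delta_X(X)$, they agree on $X^2$. The case of general $I$ is identical with $\overline W$ replaced by $V^I_\phi$ and $\Delta_X$ by the partial diagonals, and the higher coherence data (skew-symmetry, Jacobi, and their higher compatibilities) is transported from the $Y$-side by the same base-change isomorphisms, so $\CB_X^\prime\cong\CB_X$ as chiral algebras.

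I expect the main obstacle to be exactly this base-change compatibility: making the identification of $\CA_{X^2}$ on $\overline W$ with $(\phi^2)^!\CA_{Y^2}$, and the resulting identification of distinguished triangles, match on the nose---in particular getting the cohomological shifts and the various base-change and adjunction isomorphisms to line up---is where the real bookkeeping lies. Conceptually, though, everything rests on the single geometric point that although the naive pullback square fails to be Cartesian over $X^2$ (it is spoiled by the points of $Z_\phi$), it is Cartesian over the neighbourhood $\overline W=V^2_\phi$ of the diagonal, and the chiral bracket only ever sees that neighbourhood---the same ``concentrated near the diagonal'' principle that underlies the notion of a weak factorization space.
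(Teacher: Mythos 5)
Your proposal is correct and takes essentially the same route as the paper: restrict to the open neighbourhood $\overline{W} = V^2_\phi$ of the diagonal, identify the pulled-back factorization data there with $(\phi^2)^!$ of the corresponding data over $Y$ (via the weak/Glue construction), match the resulting bracket against the definition of $\phi^*_\ch$ using base change, and note that both brackets vanish away from the diagonal, with higher products and coherences handled analogously. Your explicit observation that the square formed by $\Delta_X$, $\Delta_Y$, $\phi$, $\phi^2$ becomes Cartesian after restriction to $\overline{W}$ is precisely the base-change input the paper invokes when defining $\phi^*_\ch$ and comparing the two diagrams.
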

\begin{proof}
Let us first give an abstract proof in the language of $\CD$-modules on the Ran spaces. Then we will give an explicit proof in terms of the underlying modules and maps on products of $X$ and $Y$. As described above, we have a monoidal functor of chiral monoidal categories
\begin{align*}
\overline{\phi}^*: \CD(\Ran Y) \to \CD(V_\phi).
\end{align*}
Following Francis--Gaitsgory, Koszul duality gives an equivalence between categories of Lie algebra objects and categories of cocommutative Lie algebra objects in each of these monoidal categories. (A modification of the proof that $\CD(\Ran Y)$ is pro-nilpotent shows that $\CD(V_\phi)$ is pro-nilpotent as well, and then Proposition 4.14 of \cite{FG} gives the equivalence.) The Koszul duality functors are functorial; in particular, they are intertwined by $\overline{\phi}^*$, as well as by the restriction functor $\lambda^*$ from $\Ran X$ to $V_\phi$. Theorem 5.2.1 of \cite{FG} shows that Koszul duality restricts to equivalences between the categories of chiral algebras and factorization algebras, and from our discussions above, we know that $\overline{\phi}^*$ and $\lambda^*$ also preserve these categories. This gives the desired result. 

However, in applications it may be useful to have a more hands-on comparison of the actual brackets and sheaves involved, so we will provide that here. Let us begin by recalling the construction of the chiral algebra $(\CB_Y, \mu_Y)$ from the factorization algebra $\{\CA_{Y^I}\}$, in particular focusing on the higher compatibility isomorphisms, which are not discussed elsewhere in the literature. 

The $\CD_Y$-module underlying the chiral algebra is $\CB_Y \defeq \CA_{Y}[-1]$. 

The chiral bracket $\mu_Y$ is given by composing the maps from $(j_Y)_* j_Y^* \left(\CA_Y[-1] \boxtimes \CA_Y[-1] \right)$ to $(\Delta_Y)_! \left( \CA_Y [-1] \right)$ in the following diagram:

\begin{center}
\begin{equation} \label{diagram: chiral bracket}
\begin{tikzpicture}[baseline=(current  bounding  box.center), >=angle 90,  bij/.style={above,sloped,inner sep=0.5pt}]
\matrix(b)[matrix of math nodes, row sep=2em, column sep=3em, text height=1.5ex, text depth=0.25ex, ampersand replacement=\&]
{          \& (j_Y)_* j_Y^* \left(\CA_Y[-1] \boxtimes \CA_Y[-1] \right) \& \\
           \& (j_Y)_* j_Y^* \left( \left(\CA_Y \boxtimes \CA_Y\right)[-2]\right) \& \\
 \CA_{Y^2}[-2] \& (j_Y)_* j_Y^* \left(\CA_{Y^2}[-2] \right)             \& (\Delta_Y)_! (\Delta_Y)^! \left(\CA_{Y^2}[-1] \right)\\
           \&                                             \& (\Delta_Y)_! \left(\CA_Y[-1]\right). \\};
\path[->, font=\scriptsize]
(b-1-2) edge node[bij]{$\sim$} node[left]{$(j_Y)_* j_Y^* \tau_{\CA_Y}$} (b-2-2)
(b-2-2) edge node[bij]{$\sim$} node[left]{$(j_Y)_*d_Y$} (b-3-2);
\path[->, font=\scriptsize]
(b-3-1) edge (b-3-2)
(b-3-2) edge node[above]{Res} (b-3-3);
\path[->, font=\scriptsize]
(b-3-3) edge node[bij]{$\sim$} node[left]{$(\Delta_Y)_!\nu_Y$} (b-4-3);
\end{tikzpicture}
\end{equation}
\end{center}

Here $\tau_{\CA_Y}$ is the supersymmetric identification of the complexes of $\CD_{Y^2}$ modules
\begin{align*}
\tau_{\CA_Y}: \CA_Y[-1] \boxtimes \CA_Y[-1] \EquivTo \left(\CA_Y \boxtimes \CA_Y\right)[-2].
\end{align*}
The higher isomorphisms giving the graded skew-symmetry of $\mu_Y$ are induced by those of $\tau_{\CA_Y}$ (composed with those describing the symmetry of the other maps in the composition). 

Finally, the higher isomorphisms comprising the Jacobi identity come from the Cousin complexes of the $\CD_{Y^I}$-module $\CA_{Y^I}$ with respect to the diagonal stratification. We will focus on the case of $\CA_{Y^3}$, to see the first levels of the structure. In this case, we introduce notation which is hopefully self-explanatory: for example, $j_{(1\ne 2,3)}$ is the inclusion of the open subscheme $\{(y_1, y_2, y_3) \ \vert \ y_1 \ne y_2, y_3 \}$ in $Y^3$. In our earlier notation, this $j_{1\ne 2, 3}$ would have been written as $j_Y(\alpha)$, for $\alpha: \{ 1,2,3 \} \surj \{1, \{2,3\}\}$. We also denote by $\Delta_{(3)}$ the inclusion of the small diagonal,  and by $j_{(3)}$ the inclusion of the complement of the fat diagonal. Then the complex is as follows:
\begin{small}
\begin{center}
\begin{tikzpicture}[>=angle 90,  bij/.style={above,sloped,inner sep=0.5pt}]
\matrix(b)[matrix of math nodes, row sep=2em, column sep=3em, text height=1.5ex, text depth=0.25ex]
{ j_{(3),*}j_{(3)}^* \CA_{Y^3} & \displaystyle{\bigoplus_{i \in {1,2,3}} j_{(i \ne j,k),*} j_{(i\ne j, k)}^* \Delta_{(j=k),!} \Delta _{(j=k)}^! \left(\CA_{Y^3}[1] \right)} & \Delta_{(3),!} \Delta_{(3)}^! \left(\CA_{Y^3}[2] \right) \\};
\path[->, font=\scriptsize]
(b-1-1) edge node[above]{$d$} (b-1-2)
(b-1-2) edge node[above]{$d$} (b-1-3);
\end{tikzpicture}
\end{center}
\end{small}
Here the sum is over $i = 1,2,3$, and $(j, k)$ is the unordered pair consisting of the remaining two elements, so that there are three summands in the middle term. The components of the differentials $d$ are given by suitable residue maps; there is a canonical $2$-morphism $d^2 \EquivTo 0$ given by properties of residue maps. Furthermore, we will show that each of the three components $(d^2)_i$ of $d^2$ can be identified with a shift of one of the iterated chiral brackets making up the Jacobi identity. For example, we define the composition
\begin{align*}
\mu_{Y, 1(23)}: j_{(3),*}j_{(3)}^*(\CB_Y \boxtimes \CB_Y \boxtimes \CB_Y) \to \Delta_{(3),!} \CB_Y
\end{align*}
as follows:
\begin{multline*}
 j_{(3),*}j_{(3)}^*(\CB_Y \boxtimes \CB_Y \boxtimes \CB_Y) \simeq j_{(1\ne 2,3),*}j_{(1\ne 2,3)}^* \left(\CB_Y \boxtimes j_*j^*(\CB_Y \boxtimes \CB_Y) \right)\\
 \to  j_{(1\ne 2,3),*}j_{(1\ne 2,3)}^* \left(\CB_Y \boxtimes \Delta_!(\CB_Y) \right)  \\
 \simeq  j_{(1\ne 2,3),*}j_{(1\ne 2,3)}^* \Delta_{(2=3),!}\left(\CB_Y \boxtimes \CB_Y \right)\\
 \simeq \Delta_{(2=3),!} j_{(3),*} j_{(3)}^* \left(\CB_Y \boxtimes \CB_Y \right) \\
 \to \Delta_{(2=3),!} \Delta_! \CB_Y\\ 
  \simeq  \Delta_{(3),!} \CB_Y.\\
\end{multline*}
\vskip -20 pt

The factorization isomorphism for the surjection $\id: \{1,2,3\} \surj \{1,2,3\}$ gives an identification
\begin{align*}
j_{(3),*}c_{\id,Y}:  j_{(3),*}j_{(3)}^* \CA_{Y^3} \EquivTo j_{(3),*}j_{(3)}^*(\CA_Y \boxtimes \CA_Y \boxtimes \CA_Y).
\end{align*}
The Ran isomorphism for the surjection $p: \{1,2,3\} \surj \{ \{1,2,3\}\} = \{ * \}$ gives an identification
\begin{align*}
\Delta_{(3),!}\nu_{p, Y}: \Delta_{(3),!} \Delta_{(3)}^! (\CA_{Y^3}[2]) \EquivTo \Delta_{(3),!}(\CA_Y[2]).
\end{align*}
Together, the factorization and Ran isomorphisms (and canonical base change isomorphisms) give two identifications of the sheaves $j_{(1\ne 2,3)}^* \Delta_{(2=3),!} \Delta _{(2=3)}^! \left(\CA_{Y^3}[1] \right)$ and $j_{(1\ne 2,3),*}j_{(1\ne 2,3)}^* \Delta_{(2=3),!}\left(\left(\CA_Y \boxtimes \CA_Y \right)[1]\right)$ as follows:
\begin{multline*}
f_1: j_{(1\ne 2,3),*} j_{(1\ne 2,3)}^* \Delta_{(2=3),!} \Delta _{(2=3)}^! \left(\CA_{Y^3}[1] \right) \simeq \Delta_{(2=3),!} \Delta_{(2=3)}^!  j_{(1\ne 2,3),*} j_{(1\ne 2,3)}^*\left(\CA_{Y^3}[1] \right)\\
\xrightarrow{\text{Factorization}} \Delta_{(2=3),!} \Delta_{(2=3)}^!  j_{(1\ne 2,3),*} j_{(1\ne 2,3)}^*\left(\left(\CA_{Y} \boxtimes \CA_{Y^2}\right) [1]\right)\\
\simeq j_{(1\ne 2,3),*} j_{(1\ne 2,3)}^*\left(\left(\CA_Y \boxtimes \Delta_! \Delta^! \CA_{Y^2}\right)[1] \right)\\
\xrightarrow{\text{Ran}}  j_{(1\ne 2,3),*} j_{(1\ne 2,3)}^*\left(\left(\CA_Y \boxtimes \Delta_!\CA_{Y}\right)[1] \right)\\
\simeq j_{(1\ne 2,3),*} j_{(1\ne 2,3)}^*\Delta_{(2=3),!}\left(\left(\CA_Y \boxtimes \CA_{Y}\right)[1] \right);\\
f_2: j_{(1\ne 2,3),*} j_{(1\ne 2,3)}^* \Delta_{(2=3),!} \Delta _{(2=3)}^! \left(\CA_{Y^3}[1] \right)
\xrightarrow{\text{Ran}} j_{(1\ne 2,3),*} j_{(1\ne 2,3)}^* \Delta_{(2=3),!} \left(\CA_{Y^2}[1] \right)\\
\simeq \Delta_{(2=3),!} j_* j^*\left(\CA_{Y^2}[1] \right) \\
\xrightarrow{\text{Factorization}}  \Delta_{(2=3),!} j_* j^*\left(\left(\CA_{Y} \boxtimes \CA_Y \right)[1] \right)\\
\simeq  j_{(1\ne 2,3),*} j_{(1\ne 2,3)}^*\Delta_{(2=3),!}\left(\left(\CA_Y \boxtimes \CA_{Y}\right)[1] \right).
\end{multline*}
Combining, we have the following diagram, in which the composition of the horizontal arrows on the top is $(d^2)_1$, one of the three components of $d^2$, and the composition of the horizontal arrows on the bottom is a shift of the iterated chiral bracket $\mu_{Y, 1(23)}$:
\begin{small}
\begin{center}
\begin{equation}\label{diag: Cousin complex}
\begin{tikzpicture}[baseline=(current  bounding  box.center),>=angle 90,  bij/.style={above,sloped,inner sep=0.5pt}]
\matrix(b)[matrix of math nodes, row sep=2em, column sep=3em, text height=1.5ex, text depth=0.25ex]
{ j_{(3),*}j_{(3)}^* \CA_{Y^3} & j_{(1\ne 2,3),*} j_{(1\ne 2,3)}^* \Delta_{(2=3),!} \Delta _{(2=3)}^! \left(\CA_{Y^3}[1] \right)  & \Delta_{(3),!} \Delta_{(3)}^! \left(\CA_{Y^3}[2] \right) \\
j_{(3),*}j_{(3)}^*(\CA_Y \boxtimes \CA_Y \boxtimes \CA_Y) & j_{(1\ne 2,3),*} j_{(1\ne 2,3)}^*\Delta_{(2=3),!}\left(\left(\CA_Y \boxtimes \CA_{Y}\right)[1] \right) &  \Delta_{(3),!}(\CA_Y[2]).\\};
\path[->, font=\scriptsize]
(b-1-1) edge node[above]{$\text{Res}$} (b-1-2)
(b-1-2) edge node[above]{$\text{Res}$} (b-1-3)
(b-2-1) edge (b-2-2)
(b-2-2) edge (b-2-3) 
(b-1-1) edge node[bij]{$\sim$} node[left]{$j_{(3),*}c_{\id,Y}$} (b-2-1)
(b-1-3) edge node[bij]{$\sim$} node[left]{$\Delta_{(3),!}\nu_{p, Y}$} (b-2-3)
(b-1-2) edge[bend left=30] node[right]{$f_2$} (b-2-2)
(b-1-2) edge[bend right=30] node[left]{$f_1$} (b-2-2);
\end{tikzpicture}
\end{equation}
\end{center}
\end{small}
Recalling the definition of $\mu_Y$ in terms of the factorization algebra $\{\CA_{Y^I}$, one can check that 2-morphism giving the compatibility between factorization isomorphisms makes the rightmost square commute. Likewise, the 2-morphism giving the compatibility between Ran isomorphisms makes the leftmost square commute. Finally, the compatibility isomorphism between Ran and factorization isomorphisms gives an identification $f_1 \EquivTo f_2$. 

All together, we obtain 2-morphisms
\begin{align*}
\mu_{Y, 1(23)} \EquivTo (d^2)_1[-3].
\end{align*}
We obtain a similar 2-morphism for each iterated chiral bracket and each component of $d^2$ (with appropriate signs); summing gives
\begin{align*}
\mu_{Y, 1(23)} - \mu_{Y, (12)3} - \mu_{Y,2(13)} \EquivTo d^2[-3] \EquivTo 0.
\end{align*}
This composition is the 2-morphism of the Jacobi identity for $\mu_Y$. The higher morphisms giving further compatibilities are obtained by similarly analyzing the Cousin complex for higher products of $Y$. 

With these preliminaries established, we can begin the comparison of $(\CB_X, \mu_X)$ and $(\CB^\prime_X, \mu_X^\prime)$. Recall that $\CB_X$ is defined by first taking the chiral algebra on $Y$ and then pulling back to $X$, whereas $\CB^\prime$ is defined by first pulling back the factorization algebra to $X$ and then taking the corresponding chiral algebra. 

First, on the level of $\CD_Y$-modules $\CB_X$ is by definition $\phi^* \CB_Y = \phi^* \CA_Y[-1]$. On the other hand, recalling the construction of the pullback of a factorization algebra, we see that $\CA_X = \phi^* \CA_Y$, and hence that $\CB^\prime_X = \phi^*\CA_Y[-1]$. 

To compare the brackets, it suffices to work over $V_\phi(2)$, since both brackets are trivial over $U_X$. There, $\mu_X$ is defined to be the pullback $j_V^*(\phi^2)^! \mu_Y$; recalling the definition of $\mu_X$ we see that this is simply the composition of the maps in the pullback of the diagram ($\ref{diagram: chiral bracket}$) along the map $\phi^2 \circ j_V$. But this is canonically isomorphic to the restriction $V_\phi(2)$ of the version of diagram ($\ref{diagram: chiral bracket}$) over $X$ defining the chiral bracket $\mu^\prime_X$ from the factorization algebra $\{\CA_{X^I}\}$. Thus we have a canonical isomorphism identifying $\mu_X$ with $\mu_X^\prime$.

The higher morphisms giving graded skew-symmetry of $\mu_X^\prime$ come from the graded skew-symmetry of $\tau_{\CA_X}$; recalling that $\CA_X = \phi^! \CA_Y$ it is straightforward to check that $\tau_{\CA_X}$ agrees with $(\phi^2)^! \tau_{\CA_Y}$, and the higher morphisms evincing the graded skew-symmetry of $\tau_{\CA_X}$ are the pullbacks of those for $\tau_{\CA_Y}$, which are exactly the higher morphisms we use to see the graded skew symmetry of $\mu_Y$. It follows that these pullbacks give the skew-symmetry structure both for $\mu_X^\prime$, and for the pullback $\mu_X$ of $\mu_Y$. 

Similarly, to compare the higher morphisms of the Jacobi identity, we first notice that it is enough to check over $V_\phi\{1,2,3\}$ (indeed, we have an open cover of $X^3$ given by $V_\phi\{1,2,3\}$ and the $U_{i \ne j,k}$, and for each $U_{i \ne j, k}$ the sum of the iterated brackets is canonically trivial). Denoting by $j_{V,3}$ the inclusion of $V_\phi\{1,2,3\}$ in $X^3$, we see that for both $\mu_X$ and $\mu_X^\prime$, the sum of the iterated brackets on $V_\phi\{1,2,3\}$ is trivialized by the morphisms making the pullback of the diagram (\ref{diag: Cousin complex}) along $\phi^2 \circ j_{V,3}$ commute. The same arguments also show that higher compatibility isomorphisms agree. As we saw in the discussion of weak factorization algebras, at a given number $n-1$ of iterations, there are only finitely many conditions to check on $X^n$, so proceeding in this manner gives a complete identification of the chiral algebras $(\CB_X , \mu_X)$ and $(\CB_X^\prime, \mu_X^\prime)$. 

\end{proof}

\bibliographystyle{alpha}
\bibliography{bibliography}

\end{document}